\title[Robust Pontryagin Maximum Principle]{Robust Discrete-Time Pontryagin Maximum Principle on Matrix Lie Groups}
\author[A. A. Joshi, D. Chatterjee and R. N. Banavar]{Anant A. Joshi, Debasish Chatterjee and Ravi N. Banavar}
\address{Department of Systems and Control Engineering,\\ IIT Bombay, Powai\\ Mumbai 400076, India\\ {\tt  \{anantjoshi,dchatter,banavar\}@iitb.ac.in}}
\keywords{optimal control, robust control, geometric control, Pontryagin maximum principle, saddle point}
\date{\today}
\begin{document}
\maketitle

\begin{abstract}

This article considers a discrete-time robust optimal control problem on matrix Lie groups. The underlying system is assumed to be perturbed by exogenous unmeasured bounded disturbances, and the control problem is posed as a min-max optimal control wherein the disturbance is the adversary and tries to maximise a cost that the control tries to minimise.
	Assuming the existence of a saddle point in the problem, we present a version of the Pontryagin maximum principle (PMP) that encapsulates first-order necessary conditions that the optimal control and disturbance trajectories must satisfy. This PMP features a saddle point condition on the Hamiltonian and a set of backward difference equations for the adjoint dynamics. We also present a special case of our result on Euclidean spaces.
	We conclude with applying the PMP to robust version of single axis rotation of a rigid body.

\end{abstract}

\section{Introduction}
Optimal control is a cornerstone in the theory of modern control \cite{agrachev,liberzon,sarg-2000}.
The Pontryagin maximum principle \cite{pontryagin} (henceforth referred to as the PMP) constitutes an integral part of optimal control theory, and it provides first-order necessary conditions for optimality.
These necessary conditions make the process of finding an optimal control significantly simpler by narrowing the set of admissible controls into a subset of candidate optimisers, and they are used by numerical algorithms to search for an optimal control. It is, in fact, one of the most widely used tools in optimal control apart from the Hamilton-Jacobi-Bellman (HJB) approach. 
The PMP has traditionally been studied in a continuous-time setting, and while the discrete-time version \cite{bolt} received less attention in the initial stages, there have recently been several investigations into the discrete time PMP, for instance, in the presence of constraints and in the geometric setting; see, e.g., \cite{gupta-2019, mishal-2020,prad-2019,karmvir-2018} and the references therein.

The mathematical model of any engineering system is almost always an approximation of its actual behaviour, and consequently, it is essential to address the robustness aspect of control schemes for practical applications. By this we mean the ability of the controller to give satisfactory performance under system modelling uncertainties and/or the presence of exogenous unmeasured disturbances.  Our emphasis here will be on deterministic controllers formulated as min-max optimisation problems (also studied as games), in which the disturbance is the adversary and tries to maximise the cost which the control tries to minimise  \cite{basar-1995,basar-1998,chen-1997, chern-2005,raimondo-2009}.

PMPs for robust optimal control problems of the aforementioned type have been derived in \cite{vinter-2005} for continuous time systems with uncertainties, and in \cite{bolt-robust} for systems that have parametric uncertainties. For continuous time systems, there has been considerable work in the game theoretic framework on the min-max control for systems with disturbances, we pick the representative articles \cite{bernhard-2015,bressan-2011} from that literature since they contain results that closely resemble the PMP. When \cite{bressan-2011} is specialized to our setting of a min-max problem, one arrives at a zero sum differential game which has a Nash equilibrium (a game theoretic analog of a saddle point). This gives rise to two distinct sub-problems, one of minimising the cost over admissible controls (and this problem is parametrised by the optimal disturbance), and the other of maximising the cost over admissible disturbances (which is paramerised by the optimal control). The author proceeds to apply the PMP on both sub-problems individually to arrive at two sets of two-point boundary value problems, each with their own Hamiltonians and covectors, each parametrized by the solution of the other. \cite{bernhard-2015} takes this further by deriving a single maximum principle with a single Hamiltonian and single covector by employing the Isaacs equation \cite{galperin-2008} in an interesting fashion. Our approach here is derived from the ideas in these two articles, but we operate under a different regime that we now explain.
 
A broad class of aerospace and mechanical systems evolve on the class of smooth manifolds known as Lie groups. These state spaces lack a vector space structure, which warrants new techniques to be developed in order to study these systems, and such techniques fall under the broad umbrella of geometric mechanics and control \cite{agrachev,marsden}. 
Optimal control of such systems has received significant attention \cite{agrachev,bloch-2015}; in particular, PMPs for systems evolving on smooth manifolds has been studied both in continuous and discrete time \cite{gupta-2019,mishal-2020,karmvir-2018} in considerable detail. 
More specifically, geometric discrete-time optimal control, designed specifically to respect the non-flat nature of the underlying state spaces, has been of great interest in the engineering community since the associated techniques eliminate problems that occur by using local parametrisation \cite{bloch-2006,fernando-2013,karmvir-2018-jgcd}.

In \cite{karmvir-2018}, a discrete-time PMP for optimal control problems for systems evolving on matrix Lie groups was presented. That work however did not consider the performance of the controller under the effect of exogenous unmeasured disturbances. The current work takes the problem a step further by incorporating the effect of bounded disturbances acting on the class of systems considered in \cite{karmvir-2018} and by posing the optimal control problem as a min-max problem. Assuming the existence of a saddle point of the cost function, we provide first-order necessary conditions that the control and disturbance satisfy for optimality are presented in the form of a modification of the PMP. We arrive at a saddle point condition on the Hamiltonian and backward difference equations for the covector (also termed adjoint) dynamics. We also present a specialised version of our result to Euclidean spaces for those interested in directly applying it to such systems.
Our results are similar in spirit to \cite{bernhard-2015} but different in three very significant aspects:
\begin{itemize}[label=\(\circ\), leftmargin=*, align=left]
	\item We consider a problem that evolves in discrete-time, for which it is not possible to derive the maximum principle from a min-max version of the Bellman's equation. This is relevant, since in continuous-time the PMP can be derived using ideas from the HJB partial differential equation \cite[Chapter 5]{liberzon}, and similarly the modification of PMP in \cite{bernhard-2015} can be derived using ideas from the Isaacs equation.
	\item \cite{bernhard-2015} hypothesizes that the Hamiltonian satisfies a saddle point condition \emph{before} establishing their result. We do not do make any such assumption, but instead the saddle point condition appears naturally in our development. 
	\item \cite{bernhard-2015} assumes that the abnormal multiplier used in the Hamiltonian is non-zero at the outset, but we \emph{prove} that it must always be non-zero in our setting.
\end{itemize}
The results closest in spirit to ours are in \cite{bolt-robust}, but as noted earlier, \cite{bolt-robust} considers parametric uncertainty in the system.

This paper is organized as follows. 
Section \ref{sec:prelim} contains preliminaries and the statement of the maximum principle for our case, which is proven in Section \ref{sec:proof}.
Section \ref{sec:example} contains an aerospace example where our theory is applied.

\section{Preliminaries and Statement of Main Result}
\label{sec:prelim}
\subsection{Mathematical Preliminaries}

We present some mathematical preliminaries in this subsection. 
$\mathbb{W}$ will denote the set of non-negative integers.

Given a vector space $\V$, let $\V^*$ denote its dual space, which is the set of all linear functionals (also termed covectors) on $\V$. 
Denote by $\inprod{\cdot}{\cdot}: \V^* \times \V \to \R$ the duality pairing. 
Given a linear map between two vector spaces $\map: \V_{1} \to \V_{2}$, let $\map^*: (\V_{2})^* \to (\V_{1})^*$ denote the dual of $\map$ defined as $\inprod{\map^*(\eta)}{v} = \inprod{\eta}{\map(v)}$ $\forall \eta \in (\V_{2})^*, v \in \V_{1}$. The standard inner product on $\R^n$ will also be denoted by
$\inprod{\cdot}{\cdot}$ since it is equivalent to the duality pairing on $\R^n$. 
For a smooth map between two vector spaces $\map: \V_1 \to \V_2$, for any $x \in \V_1$, $\derivo{\map}{x}$ will denote the derivative of $\map$ at $x$ and $\derivosecond{\map}{x}$ will denote the second derivative of $\map$ at $x$. 
Given a third vector space $\V_3$, for a smooth map $\V_1 \times \V_2 \ni (x_1,x_2) \mapsto \map(x_1,x_2) \in \V_3$, for any $(\bar{x}_1,\bar{x}_2) \in \V_1 \times \V_2$,  $\deriv{x_1}{\map}{\bar{x}_1,\bar{x}_2}$ denotes the derivative of $\map(\cdot,\bar{x}_2)$, evaluated at $\bar{x}_1$ and $\derivsecond{x_1}{\map}{\bar{x}_1,\bar{x}_2}$ denotes the second derivative of $\map(\cdot,\bar{x}_2)$, evaluated at $\bar{x}_1$. 
If two vector spaces $\V_1$ and $\V_2$ are isomorphic, it will be denoted by $\V_1 \cong \V_2$. 
The preceding material was from  \cite{halmos,spivak}.

Given a cone $\cone \subset \R^n$ with vertex at $x \in \R^n$, we define the dual cone of $\cone$ as 
$
    \dual{\cone}(x) \defn \set{a \in \R^n}{\inprod{a}{x'-x} \geq 0 \forall x' \in K}.
$
For a convex set $\Omega \subset \R^n$, its supporting cone with vertex at $x \in \Omega$ is defined as 
$
\cone_{\Omega}(x) \defn \cl({\bigcup}_{{\alpha>0}} \set{x + \alpha (x' - x )}{x' \in \Omega}
)$.
A family of convex cones is defined to be \textit{separable} if there exists a hyperplane that separates one of them from the intersection of the others. If the family is not \textit{separable} then it is defined to be \textit{inseparable}. An interested reader is referred to \cite[Chapter~6]{bolt-robust} for a more elaborate treatment.

Consider a smooth function $\map: \R^n \to \R$. Suppose that we desire to find $\argmin \map(x)$ with $x \in \Sigma \subset \R^n$. Then, as per \cite[Theorem~6.1]{bolt-robust} $\opt{x} \in \Sigma$ is a minimum of $\map$ over $\Sigma$ if and only if $\Sigma \cap \Omega = \{\opt{x}\}$, where $\Omega \defn \set{x \in \R^n}{\map(x) < \map(\opt{x}) } \cup \{\opt{x}\}$. 

\begin{definition}[{\cite[Definition 11.4]{gueler}}]
	\label{defn:SP}
	Consider two arbitrary sets $\Uc$ and $\D$ and an arbitrary function $\map : \Uc \cross \D \to \R$. $(\opt{\con},\opt{\dist}) \in \Uc \times \D$ is a saddle point of $\map$ if
	\[
		\map(\opt{\con},\dist) \le \map(\opt{\con},\opt{\dist}) \le \map(\con,\opt{\dist}) \forall \con \in \Uc, \dist \in \D
	\]
\end{definition}

\begin{proposition}
\label{prop:saddle-point}
Consider a smooth function $\map : \R^{\dimu} \times \R^{\dimd} \to \R$ and $\Uc \times \D \subset \R^{\dimu} \times \R^{\dimd}$ with $(\opt{\con},\opt{\dist}) \in \Uc \times \D$. Define $\Omega_1 \defn \set{(u,\opt{d}) \in \R^{\dimu} \times \R^{\dimd}}{\map(u,\opt{d}) < \map(\opt{u},\opt{d})}$, $\Omega_2 \defn \set{(\opt{u},{d}) \in \R^{\dimu} \times \R^{\dimd}}{\map(\opt{u},{d}) > \map(\opt{u},\opt{d})}$, $\Omega_1^\prime \defn \Omega_1 \cup  \{ (\opt{\con},\opt{\dist}) \} $ and $\Omega_2^\prime \defn \Omega_2 \cup  \{ (\opt{\con},\opt{\dist}) \} $. The following are equivalent
\enspl{P}{
\item $(\opt{\con},\opt{\dist})$ is a saddle point of $\map$ restricted to $\Uc \times \D$. \label{list:saddle-i}
\item $\big(\Omega_1 \cup \Omega_2 \cup \{ (\opt{\con},\opt{\dist}) \} \big) \cap (\Uc \times \D ) = \{ (\opt{\con},\opt{\dist}) \}$ \label{list:saddle-ii}
\item $\Omega_1^\prime \cap (\Uc \times \D )  = \{ (\opt{\con},\opt{\dist}) \}$ and $\Omega_2^\prime \cap (\Uc \times \D )  = \{ (\opt{\con},\opt{\dist}) \}$ \label{list:saddle-iii}
}
\end{proposition}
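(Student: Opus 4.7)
\medskip

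\noindent\textbf{Proof proposal.} The plan is to unpack the definitions of $\Omega_1,\Omega_2,\Omega_1',\Omega_2'$ and the saddle point condition and show the equivalence through the chain \ref{list:saddle-i} $\Leftrightarrow$ \ref{list:saddle-iii} $\Leftrightarrow$ \ref{list:saddle-ii}. Nothing in the statement requires calculus or convexity, so the whole argument is a set-theoretic bookkeeping exercise; I do not expect a genuine obstacle, and the main care needed is in keeping the roles of $u^{*}$ and $d^{*}$ straight in the two one-sided cones $\Omega_{1}$ and $\Omega_{2}$.

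First I would establish \ref{list:saddle-i} $\Leftrightarrow$ \ref{list:saddle-iii} by treating the two saddle point inequalities separately. The inequality $\phi(u^{*},d)\le\phi(u^{*},d^{*})$ for every $d\in\D$ is exactly the statement that there is no $d\in\D$ with $(u^{*},d)\in\Omega_{2}$, i.e.\ $\Omega_{2}\cap(\Uc\times\D)=\emptyset$. Since $(u^{*},d^{*})\in\Uc\times\D$ always lies in $\Omega_{2}'$, this is equivalent to $\Omega_{2}'\cap(\Uc\times\D)=\{(u^{*},d^{*})\}$. An identical argument with the roles of $u$ and $d$ swapped shows that $\phi(u^{*},d^{*})\le\phi(u,d^{*})$ for every $u\in\Uc$ is equivalent to $\Omega_{1}'\cap(\Uc\times\D)=\{(u^{*},d^{*})\}$. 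Conjoining the two equivalences yields \ref{list:saddle-i} $\Leftrightarrow$ \ref{list:saddle-iii}.

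Next I would establish \ref{list:saddle-ii} $\Leftrightarrow$ \ref{list:saddle-iii} by a direct set manipulation. Observe that
\[
    \Omega_{1}\cup\Omega_{2}\cup\{(u^{*},d^{*})\}=\Omega_{1}'\cup\Omega_{2}',
\]
and intersection distributes over union, so
\[
    \bigl(\Omega_{1}\cup\Omega_{2}\cup\{(u^{*},d^{*})\}\bigr)\cap(\Uc\times\D)=\bigl(\Omega_{1}'\cap(\Uc\times\D)\bigr)\cup\bigl(\Omega_{2}'\cap(\Uc\times\D)\bigr).
\]
Each of the two sets on the right contains $(u^{*},d^{*})$, so their union equals $\{(u^{*},d^{*})\}$ if and only if each of them equals $\{(u^{*},d^{*})\}$; that is precisely \ref{list:saddle-iii}. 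Chaining the two equivalences finishes the proof.
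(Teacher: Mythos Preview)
Your proposal is correct and follows essentially the same set-theoretic argument as the paper's proof; in particular your \ref{list:saddle-ii} $\Leftrightarrow$ \ref{list:saddle-iii} step is verbatim the paper's. The only cosmetic difference is the order of the chain: the paper first shows \ref{list:saddle-i} $\Leftrightarrow$ \ref{list:saddle-ii} via a contrapositive argument on the union, whereas you go through \ref{list:saddle-i} $\Leftrightarrow$ \ref{list:saddle-iii} by pairing each saddle inequality with its corresponding $\Omega_i'$ condition, which is marginally more direct.
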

A proof of Proposition \ref{prop:saddle-point} is provided Appendix \ref{app:proof-saddle-prop}.

Since the basic aim of the content of this paper is to find necessary conditions for a saddle point, Proposition \ref{prop:saddle-point}  yields a procedure for it as follows: 
\enspl{SP}{
\item Freeze $\dist$ at $\opt{\dist}$ and obtain necessary conditions for $\opt{\con}$ to be a minimum of $\map(\cdot,\opt{\dist})$ over $\Uc \cross \{\opt{\dist}\}$. 
These conditions will naturally be parametrised by $\opt{\dist}$. \label{list:SP1}
\item Freeze $\con$ at $\opt{\con}$ and obtain necessary conditions for $\opt{\dist}$ to be a maximum of $\map(\opt{\con},\cdot)$ over $\{\opt{\con}\} \cross \D$.
These conditions will naturally be parametrised by $\opt{\con}$. \label{list:SP2}
\item Both sets of necessary conditions should be simultaneously satisfied, however, notice that the multipliers that appear  in both sets of necessary conditions may be different. \label{list:SP3}
}
We will use the next result in our numerical simulations.

\begin{proposition}[Sufficient condition for saddle point]
Consider a smooth function $ \R^{\dimu} \times \R^{\dimd} \ni (\con,\dist) \mapsto \map(\con,\dist) \in R$ 
and a point $(\opt{\con},\opt{\dist}) \in \R^{\dimu} \times \R^{\dimd} $. Suppose that 
\en{
\item $\deriv{\con}{\map}{\opt{\con},\opt{\dist}} = 0$ and  $\derivsecond{\con}{\map}{\opt{\con},\opt{\dist}}$ is positive definite \label{list:suff-i}
\item $\deriv{\dist}{\map}{\opt{\con},\opt{\dist}} = 0$ and  $\derivsecond{\dist}{\map}{\opt{\con},\opt{\dist}}$ is negative definite \label{list:suff-ii}
}
Then there exist open sets $\Uc \subset \R^{\dimu}$ and $\D \subset \R^{\dimd}$ with $\opt{\con} \in \Uc$ and $\opt{\dist} \in \D$ such that $(\opt{\con},\opt{\dist})$ is a saddle point of $\map$ restricted to $\Uc \times \D$.
\label{prop:suff-saddle-point}
\end{proposition}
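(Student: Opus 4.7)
The plan is to decouple the bivariate problem into two univariate extremum problems, exploiting the fact that both the hypotheses and the saddle-point condition only involve $\map$ along the two coordinate slices through $(\opt{\con},\opt{\dist})$. I would introduce the slice functions $g_1(\con) \defn \map(\con,\opt{\dist})$ and $g_2(\dist) \defn \map(\opt{\con},\dist)$. Hypothesis \ref{list:suff-i} translates exactly to $\derivo{g_1}{\opt{\con}} = 0$ together with $\derivosecond{g_1}{\opt{\con}}$ positive definite, and hypothesis \ref{list:suff-ii} translates to $\derivo{g_2}{\opt{\dist}} = 0$ together with $\derivosecond{g_2}{\opt{\dist}}$ negative definite.

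Next, I would invoke the standard Euclidean second-derivative sufficient condition for a strict local minimum (respectively, maximum) on each slice function separately; this follows from a second-order Taylor expansion about the critical point together with continuity of the Hessian. Doing so supplies an open neighbourhood $\Uc \subset \R^{\dimu}$ of $\opt{\con}$ on which $g_1(\con) \ge g_1(\opt{\con})$, and, symmetrically, an open neighbourhood $\D \subset \R^{\dimd}$ of $\opt{\dist}$ on which $g_2(\dist) \le g_2(\opt{\dist})$. Re-expressing both inequalities in terms of $\map$ and concatenating them yields
\[
    \map(\opt{\con},\dist) \le \map(\opt{\con},\opt{\dist}) \le \map(\con,\opt{\dist}) \quad \forall (\con,\dist) \in \Uc \times \D,
\]
which is exactly the saddle-point condition of Definition \ref{defn:SP} for $\map$ restricted to $\Uc \times \D$.

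I do not anticipate any serious obstacle, since the argument factors cleanly: each hypothesis constrains the behaviour of $\map$ only along a single coordinate slice through $(\opt{\con},\opt{\dist})$, and the saddle-point condition itself is nothing more than the conjunction of two such one-variable inequalities, one on each slice. The one point worth flagging in the write-up is that one should not try to deduce anything about the full joint Hessian $\derivosecond{\map}{\opt{\con},\opt{\dist}}$ from the stated hypotheses; indeed a saddle point in the sense of Definition \ref{defn:SP} is perfectly compatible with $\map$ having no joint extremum at all at $(\opt{\con},\opt{\dist})$.
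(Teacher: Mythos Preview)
Your proposal is correct and mirrors the paper's own proof essentially verbatim: both freeze one variable at its optimal value, apply the standard second-order sufficient condition for a strict local extremum to the resulting single-variable function (the paper cites \cite[Theorem 2.13]{gueler}), and then assemble the two one-sided inequalities into the saddle-point definition. Your additional commentary on the joint Hessian is a fair clarifying remark but is not needed for the argument.
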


\begin{proof}
Condition \ref{list:suff-i} is a sufficient condition for $\opt{\con}$ to be a strict local minimum of $\map(\cdot,\opt{\dist})$, yielding that there exists an open set $\Uc \subset \R^{\dimu}$ with $\opt{\con} \in \Uc$ such that $\map(\opt{\con},\opt{\dist}) < \map(\con,\opt{\dist}) \forall \con \in \Uc$ \cite[Theorem 2.13]{gueler}. This gives the second inequality in Definition \ref{defn:SP}. A parallel argument for $F(\opt{\con},\cdot)$ gives the first inequality in Definition \ref{defn:SP}.
\end{proof}
%
We present some background on smooth manifolds. For more on these topics, we refer the reader to standard texts on differential geometry, for instance,  \cite[Chapter 3, 11]{lee}. For a smooth manifold $\M$, $\tspace{x}{\M}$ represents its tangent space at $x \in \M$ \cite[Page 54]{lee}.
Given two smooth manifolds $\M_{1}$ and $\M_{2}$, and a smooth map $\map: \M_{1} \to \M_{2}$, $\tango{x}{\map}: \tspace{x}{\M_{1}} \to \tspace{\map(x)}{\M_{2}}$ will denote the tangent map of $\map$  at $x$ (recall that the tangent map is the generalisation of the derivative in the context of smooth manifolds \cite[Page 68]{lee}). The cotangent map of $\map$ at $x$ (which is the dual of the tangent map \cite[Page 284]{lee}) will be denoted as $\tangostar{x}{\map}:  \left(\tspace{\map(x)}{\M_{2}}\right)^* \to \left(\tspace{x}{\M_{1}}\right)^*$, and for any $\eta \in \left(\tspace{\map(x)}{\M_{2}}\right)^*$, $\tangostar{x}{\map}(\eta)$ will denote its action on $\eta$.
Now, given a third smooth manifold $\M_3$, let $ \M_{1} \times \M_{2} \ni (x_{1},x_{2}) \mapsto \map(x_{1},x_{2}) \in \M_{3}$. Then 
$\tang{(\bar{x}_{1},\bar{x}_{2})}{x_{1}}{\map}$ 
denotes the tangent map of $\map(\cdot,\bar{x}_{2})$, evaluated at $\bar{x}_{1}$. 
The following notion will be used while defining covectors as tangent maps of real-valued functions. Given a smooth map $\map_{1} : \M_{1} \to \R$, for any $x \in \M_{1}$, $\tango{x}{\map_{1}} \in \left(\tspace{x}{\M_{1}}\right)^*$ since $\tango{x}{\map_{1}} \cdot v \in \R \forall v \in \tango{x}{\M_{1}}$ and $\tango{x}{\map_{1}}$ is a linear map. We can associate to $\tango{x}{\map_{1}}$ a covector $\eta \in \left(\tspace{x}{\M_{1}}\right)^*$ such that $\tango{x}{\map_{1}}(v) \coloneqq \inprod{\eta}{v} = \tango{x}{\map_{1}} \cdot v \forall v \in \tspace{x}{\M_{1}}$. Therefore, given a smooth map $\map_{2}: \M_{2} \to \M_{1}$, the expression $\tangostar{x}{\map_{2}}(\tango{\map_{2}(x)}{\map_{1}})$ makes perfect sense (for rigorous details see \cite[Proposition 11.18]{lee}). Note that the same ideas applies to derivatives of real-valued functions defined on vector spaces.  As we go forward in the paper, we insert helpful comments to explain intuition behind differential geometric concepts. 

Since our setting is that of a Lie group, let us quickly recall some preliminaries. Refer \cite[Chapter 5,6]{holm} for more details on Lie groups. 
For our problem we will consider a $\dimg$ dimensional matrix Lie group $\LG \subset \R^{\dimem \times \dimem}$ with identity element $\id$ and Lie algebra $\LA \subset \R^{\dimem \times \dimem}$ \cite[Chapter 5.1]{holm}. Let $\sigma: \R^{\dimg} \to \LA$ be an isomorphism. 
The dual of $\LA$ is $\LA^* \subset \R^{\dimem \times \dimem}$ and $\inprod{\eta}{v} = \tr(\eta^{\top} v) \forall \eta \in \LA^*, v \in \LA$, where $\tr(\cdot)$ denotes the trace, and $(\cdot)^{\top}$ denotes the transpose.
Let $\Phi: \LG \times \LG \to \LG$ denote the group multiplication, which by definition, is matrix multiplication for matrix Lie groups, that is, $\Phi(\gr_1,\gr_2) = \gr_1\gr_2.$  
Then $\Phi_{\gr}: \LG \to \LG \forall \gr \in \LG$ is a diffeomorphism, and $\Phi_{\gr_1}(\gr_2) = \gr_1 \gr_2$ \cite[Chapter 5.2]{holm}.
For any $\gr \in \LG$, the tangent space is characterised as $T_{\gr}\LG = T_{\id}\Phi_{\gr}(\LA)$.
The tangent map of $\Phi$ simplifies to matrix multiplication as well, that is, given $\gr_1,\gr_2 \in \LG$ and $v \in \LA$, $T_{\id}\Phi_{\gr_1} v = \gr_1 v$ and therefore, $T_{\gr_1}\Phi_{\gr_2}\left(T_{\id}\Phi_{\gr_1} v \right)= \gr_2\gr_1 v$. 
The exponential map is denoted as $\exp : \LA \to \LG$, which for matrix Lie groups is simply the matrix exponential \cite[Chapter 5.4]{holm}. 
Given $\gr \in \LG$, the $\exp$ map can be used to define smooth curve on $\LG$ passing through $\gr$ with tangent vector $T_{\id}\Phi_{\gr} \cdot v$ for $v \in \LA$ as $\R \ni s \mapsto \gr\exp(vs) \in \LG$. 
This can be conveniently used to find tangent maps of real valued functions, say $f : \LG \to \R$ as $\tango{\gr}{f} \cdot v = \left. \frac{d}{ds}\right\rvert_{s=0} f(\gr\exp(vs))$. 
The Adjoint action of $\LG$ on $\LA$ is \cite[Definition 6.40]{holm}
\begin{equation*}
\LG \times \LA \ni \left(\gr, v \right) \mapsto \Ad_{\gr}v \defn \left.\frac{d}{ds}\right|_{s=0} \gr \exp(s v)\gr^{-1}  \in \LA
\end{equation*}
For any $\gr \in \LG$, let $\Ad^*_{\gr}$ be the dual of $\Ad_{\gr}$.

\begin{remark}
Although we restrict attention to matrix Lie groups here, our derivations proceed in full generality first and then are made specific to matrix Lie groups, thus making it applicable to any finite dimensional Lie group.
\label{rem:LG-gen}
\end{remark}

For any $n \in \mathbb{W}$, $\upto{n} \defn \{0,1,\ldots,n\}$. Let $N \in \mathbb{W}$  be fixed, and will be referred to as the control horizon throughout. Before moving towards defining our problem, we briefly recall the discrete time Pontryagin maximum principle on matrix Lie groups \cite{karmvir-2018} since it plays a central role in our development. Consider a discrete time control system evolving on $\LG \times \R^{\dimg}$
\begin{subequations} 
\begin{align}
\gru_{k+1} &= \gru_k \kinu(\gru_k ,\velu_k) & \forall k \in \upto{N-1}, \label{eq:kinematics-u} \\
\velu_{k+1} &=  \dynu(\gru_k,\velu_k,\conu_k)  & \forall k \in \upto{N-1}, \label{eq:dynamics-u}
\end{align} 
\label{eq:system-u}
\end{subequations}
where 
\en{
\item $\gru_k \in \LG$ and $\velu_k \in \R^{\dimg}$, $\forall k \in \upto{N}$ are the states,  
\item $\kinu : \LG \cross \R^{\dimg} \ra \LG$ and $\dynu : \LG \cross \R^{\dimg} \cross \R^{\dimu}\ra \R^{\dimg}$ are smooth, 
\item $\conu_k \in \Uc _k \subset \R^{\dimu}$ is the control for all $k \in \upto{N-1}$. 
}
Consider the optimal control problem
\begin{subequations}
\label{eq:opt-prob-u}
\begin{alignat}{2}
 \min\limits_{\ch{\conu}} & \quad  && \Ju(\ch{\gru},\ch{\velu},\ch{\conu}) \defn  \sum\limits_{k=0}^{N-1}{\tilde{c}_k(\gru_k,\velu_k,\conu_k)} \notag \\ &&& \qquad \qquad + \tilde{c}_N(\gru_N,\velu_N) \\
\text{s.t. } &&& \text{system } \eqref{eq:system-u},\\
& & & \conu _k \in \Uc _k, \: \: \forall k \in \upto{N-1} \\ 
& & &\gru_0 = \bar{\gru}_0,\velu_0 = \bar{\velu}_0, 
\end{alignat}
\end{subequations} 
where the notation goes as,
\en{
\item $\LG \cross \R^{\dimg} \cross \R^{\dimu} \ni (\gru,\velu,\conu) \mapsto \tilde{c}_k(\gru,\velu,\conu) \in \R$, and $\LG \cross \R^{\dimg} \ni (\gru,\velu) \mapsto \tilde{c}_N(\gru,\velu) \in \R$ 
are suitable smooth functions $\forall k \in \upto{N-1}$, 
\item $\ch{\gru} \defn (\gru_0,\gru_1,\ldots,\gru_N) \in \overbrace{\LG \times \LG \times \cdots \times \LG}^{N+1 \text{ factors }}$ and $\ch{\velu} \defn (\velu_0,\velu_1,\ldots,\velu_N) \in \R^{\dimg(N+1)}$ are the state sequences,  
\item $\ch{\conu} \defn (\conu_0,\conu_1,\ldots,\conu_{N-1}) \in \R^{\dimu N}$ is the control sequence.  
}
Assume that there exists an open set $\Og \subset \LA$ such that $\exp$ map restricted to $\Og$, i.e. $\exp: \Og \to \exp(\Og)$ is a diffeomorphism and $\kinu \in \exp(\Og)$, and that $\Uc_k$ is convex $\forall k \in \upto{N-1}$. 

\begin{theorem}[{\cite[Theorem 5]{karmvir-2018}}]
For the optimization problem in \eqref{eq:opt-prob-u} let the optimal control sequence be $\opt{\ch{\conu}}$  respectively corresponding to which the state trajectory is $\opt{\ch{\gru}}$ and $\opt{\ch{\velu}}$. 
Define the Hamiltonian as (for $\nuu \in \R$) 
\begin{multline*}
\upto{N-1} \times \LA^* \times (\R^{\dimg})^* \times \LG \times \R^{\dimg} \times \R^{\dimu} \ni \\ (k,\zetau,\xiu,\gru,\velu,\conu)  \mapsto  
\hamc(k,\zetau,\xi,\gru,\velu,\conu) 
\defn \\ \nuu \tilde{c}_k(\gru,\velu,\conu)  + \inprod{\zetau}{\exp^{-1}(\kinu(\gru,\velu))}  + \inprod{\xiu}{\dynu(\gru,\velu,\conu)}.
\end{multline*}
For all $k \in \upto{N-1}$, there exist covectors  $\zetau^k \in \LA^*,\xiu^k \in (\R^{\dimg})^*, $
and define $\opt{\gamma}_k \defn (\zetau_k,\xiu_k,\opt{\gru}_k,\opt{\velu}_k,\opt{\conu}_k)$,
\begin{equation*} 
\rhou^k \defn 
\tangostar{\id}{\left( \exp^{-1}\circ\Phi_{((\opt{\gru}_{k-1})^{-1}\opt{\gru}_k)} \right)} (\zetau^k),
\end{equation*} 
which satisfy the following necessary conditions 
\en{
\item Optimal state dynamics ($\forall k \in \upto{N}$): 
\begin{align*}
\opt{\gru}_{k+1} &= \opt{\gru}_k \exp(\deri_{\zetau}\hamc(\opt{\gamma}_k)),  \\
\opt{\velu}_{k+1} &= \deri_{\xiu}\hamc(\opt{\gamma}_k); 
\end{align*}
\item Adjoint equations ($\forall k \in \upto{N-1}$): 
\begin{align*}
&\xiu^{k-1} = \deri_{\velu}\hamc(\opt{\gamma}_k), \\
&\rhou^{k-1}  = \Ad_{\exp(-\deri_{\zetau} \hamc(\opt{\gamma}_k))}^* \rhou^k + \tangostar{\id}{\Phi_{\opt{\gru}_k}}(\tang{\opt{\gamma}_k}{\gru}{\hamc} );
\end{align*}
\item Transversality relations: 
\begin{align*}
\xiu^{N-1} &= \nuu\deriv{\velu}{\tilde{c}_N}{\opt{\gru}_N,\opt{\velu}_N}, \\ 
\rhou^{N-1} &=  \nuu\tangostar{\id}{\Phi_{\opt{\gru}_N}}(\tang{(\opt{\gru}_N,\opt{\velu}_N) }{\gru}{\tilde{c}_N});  
\end{align*}
\item Hamiltonian non-positive gradient condition ($\forall k \in \upto{N-1}$): 
\begin{align*}
\inprod{\deri_{\conu}\hamc(\opt{\gamma}_{k-1})}{\tilde{\conu}_{k-1}}  \le 0 \forall \opt{\conu}_{k-1} + \tilde{\conu}_{k-1} \in \Uc_{k-1};
\end{align*}
\item Non-triviality: $\nuu \leq 0$ and if $\nuu = 0$ then at least one of the covectors is non-zero.
}
\label{th:CH-u}
\end{theorem}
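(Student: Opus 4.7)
The plan is to reduce the constrained optimization \eqref{eq:opt-prob-u} to a standard finite-dimensional equality-and-inequality constrained problem by using $\exp^{-1}$ as a local chart around each step of the optimal trajectory, and then to extract the maximum principle by a Boltyanskii-style convex-cones separation argument (in the spirit of \cite[Chapter~6]{bolt-robust}).

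First, since $\kinu(\gru_k,\velu_k) \in \exp(\Og)$ and $\exp : \Og \to \exp(\Og)$ is a diffeomorphism, the kinematic constraint \eqref{eq:kinematics-u} is locally equivalent to the $\LA$-valued equality $\exp^{-1}(\gru_k^{-1}\gru_{k+1}) - \exp^{-1}(\kinu(\gru_k,\velu_k)) = 0$. Coupled with the $\R^{\dimg}$-valued dynamic constraint \eqref{eq:dynamics-u} and the convex control restrictions $\conu_k \in \Uc_k$, this yields an optimization over $(\ch{\gru},\ch{\velu},\ch{\conu})$ that is amenable to the multiplier rule. I would attach the covector $\zetau^{k+1} \in \LA^*$ to each kinematic constraint, the covector $\xiu^k \in (\R^{\dimg})^*$ to each dynamic constraint, and the scalar $\nuu \in \R$ to the cost; the pointwise sum of these terms is precisely $\hamc$.

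The cone-separation theorem then produces a non-trivial collection of multipliers with $\nuu \le 0$ such that the associated augmented Lagrangian has vanishing variations in all free directions, with a one-sided sign on the convex control variable. Concretely, variations in $\zetau,\xiu$ recover the state dynamics in Hamiltonian form; variations in $\velu_k$ for interior $k$ yield the scalar adjoint $\xiu^{k-1} = \deri_{\velu}\hamc(\opt{\gamma}_k)$; variations in $\conu_k$ over $\Uc_k$ give the non-positive gradient condition on the Hamiltonian; and boundary variations at $k = N$ give the transversality relations.

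The principal obstacle is the $\gru_k$-variation, which must be propagated through both $\exp^{-1}(\gru_{k-1}^{-1}\gru_k)$ and $\exp^{-1}(\gru_k^{-1}\gru_{k+1})$. Using the curve $s \mapsto \opt{\gru}_k\exp(s\delta)$ and the identity $\gr\exp(v)\gr^{-1} = \exp(\Ad_{\gr}v)$, the pullback of this variation through $\exp^{-1}(\gru_k^{-1}\gru_{k+1})$ factors through conjugation by $(\opt{\gru}_k^{-1}\opt{\gru}_{k+1})^{-1} = \exp(-\deri_{\zetau}\hamc(\opt{\gamma}_k))$, which on the covector side is exactly $\Ad^*_{\exp(-\deri_{\zetau}\hamc(\opt{\gamma}_k))}$ applied to $\rhou^k$. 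The pullback through $\exp^{-1}(\gru_{k-1}^{-1}\gru_k)$ is, by definition, $\rhou^{k-1}$, while the direct $\gru_k$-dependence of the cost and of $\dynu,\kinu$ inside $\hamc$ collects into $\tangostar{\id}{\Phi_{\opt{\gru}_k}}(\tang{\opt{\gamma}_k}{\gru}{\hamc})$; equating these pieces reproduces the backward recursion. Non-triviality is a direct byproduct of the separation step, since simultaneous vanishing of $(\nuu,\zetau,\xiu)$ would contradict the inseparability of the cones at the optimum.
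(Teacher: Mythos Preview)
The paper does not prove Theorem~\ref{th:CH-u} at all: it is quoted verbatim as \cite[Theorem~5]{karmvir-2018} and used as a black box in the proof of the main result (Theorem~\ref{th:CH}). There is therefore no ``paper's own proof'' to compare against.

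That said, your sketch is a faithful outline of the argument one expects in \cite{karmvir-2018}: the reduction via $\exp^{-1}$ to a Euclidean equality-constrained problem, attachment of multipliers $(\nuu,\zetau,\xiu)$, and a Boltyanskii cone-separation argument are exactly the machinery the paper's preliminaries set up (the dual cone, supporting cone, and separability definitions from \cite[Chapter~6]{bolt-robust} are there precisely for this purpose, even though they are never invoked in the body). Your handling of the $\gru_k$-variation --- pushing the curve $s\mapsto \opt{\gru}_k\exp(s\delta)$ through both $\exp^{-1}(\gru_{k-1}^{-1}\gru_k)$ and $\exp^{-1}(\gru_k^{-1}\gru_{k+1})$ and recognising the conjugation as $\Ad^*_{\exp(-\deri_{\zetau}\hamc)}$ on the covector side --- is the correct mechanism for producing the backward recursion for $\rhou$. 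The indexing convention (you attach $\zetau^{k+1}$ to the $k$-th kinematic constraint, whereas the theorem writes the Hamiltonian at time $k$ with $\zetau^k$) deserves a sentence of reconciliation in a full write-up, but is not a gap.
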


\subsection{Problem Definition}
Consider a discrete time control system evolving on $\LG \times \R^{\dimg}$ as
\begin{subequations}
\begin{align}
\gr_{k+1} &= \gr_k \kin(\gr_k ,\vel_k) \: & \forall k \in \upto{N-1}, \label{eq:kinematics} \\
\vel_{k+1} &= \dyn(\gr_k,\vel_k,\con_k, \dist_k) \: & \forall k \in \upto{N-1}, \label{eq:dynamics} 
\end{align} 
\label{eq:system}
\end{subequations}
where
\enspl{Sys}{
\item $\gr_k \in \LG$ and $\vel_k \in \R^{\dimg}$ (equivalently, $\sigma(\vel_k) \in \LA$) $\forall k \in \upto{N}$ are the states,  \label{list:Sysi}
\item $\kin : \LG \cross \R^{\dimg} \ra \LG$ and $\dyn : \LG \cross \R^{\dimg} \cross \R^{\dimu} \cross \R^{\dimd} \ra \R^{\dimg}$ are smooth and respectively represent the kinematics and dynamics of the system, \label{list:Sysii}
\item $\con_k \in \Uc _k \subset \R^{\dimu}$ and $\dist_k \in \D _k \subset \R^{\dimd}$ respectively are the control and unmeasured external disturbance  for $k \in \upto{N-1}$. \label{list:Sysiii}
}
\begin{remark}
Physical systems (whether on Euclidean spaces or manifolds) are typically modelled as evolving in continuous time as differential equations. However, for implementation purposes one finds it suitable to represent them as discrete-time systems using. Discretising systems that evolve on manifolds is a non-trivial task since the discretisation should respect the manifold structure of the configuration space of the system. Discrete mechanics finds applications here, see, e.g., \cite{marsden-2001}. We will suppose that $\kin$ has been obtained using such an appropriate discretisation technique. 
\end{remark}
Our aim is to obtain necessary conditions satisfied by the optimiser of the following optimal control problem:
\begin{subequations}
\label{eq:opt-prob}
\begin{alignat}{2}
 \min\limits_{\ch{\con}} \max\limits_{\ch{\dist}} & \quad && \J(\ch{\gr},\ch{\vel},\ch{\con},\ch{d}) \defn  \sum\limits_{k=0}^{N-1}{c_k(\gr _k,\vel_k,\con _k,\dist _k)} \notag \\
 &&& \qquad \qquad \qquad \qquad \qquad  \quad + c_N(\gr _N,\vel_N) \label{eq:cost} \\
\text{s.t.} & && \text{system } \eqref{eq:system}, \\
& & & \con _k \in \Uc _k, \dist_ k \in \D _k \: \forall k \in \upto{N-1},  \\
& & &\gr_0 = \bar{\gr}_0,\vel_0 = \bar{\vel}_0, 
\end{alignat}
\end{subequations}
where the notation goes as,
\enspl{N}{
\item $\LG \cross \R^{\dimg} \cross \R^{\dimu} \ni (\gr,\vel,\con) \mapsto c_k(\gr,\vel,\con) \in \R$, and $\LG \cross \R^{\dimg} \ni (\gr,\vel) \mapsto c_N(\gr,\vel) \in \R$ 
are suitable smooth functions $\forall k \in \upto{N-1}$, \label{list:Ni}
\item $\ch{\gr} \defn (\gr_0,\gr_1,\ldots,\gr_N) \in \overbrace{\LG \times \LG \times \cdots \times \LG}^{N+1 \text{ factors }}$ and $\ch{\vel} \defn (\vel_0,\vel_1,\ldots,\vel_N) \in \R^{\dimg(N+1)}$ are the state sequences,  \label{list:Nii}
\item $\ch{\con} \defn (\con_0,\con_1,\ldots,\con_{N-1}) \in \R^{\dimu N}$ is the control sequence,  \label{list:Niii}
\item $\ch{\dist} \defn (\dist_0,\dist_1,\ldots,\dist_{N-1}) \in \R^{\dimd N}$ is the disturbance sequence.  \label{list:Niv}
}
\begin{assumption}
The following assumptions (taken from \cite[Assumption~1]{karmvir-2018}) are required to transfer the optimisation problem into a Euclidean setting and to obtain necessary conditions that the optimiser satisfies.
\en{
\item There exists an open set $\Og \subset \LA$ such that $\exp$ map restricted to $\Og$, i.e. $\exp: \Og \to \exp(\Og)$ is a diffeomorphism and the discretisation is such that $\kin \in \exp(\Og)$.
\item $\Uc _k$ and $\D _k$ are compact and convex sets $\forall k \in \upto{N}$. 
}
\label{assn:exp}
\end{assumption}

We assume that the cost in \eqref{eq:cost} admits a saddle point.

\begin{assumption}
$(\opt{\ch{\gr}},\opt{\ch{\vel}},\opt{\ch{\con}},\opt{\ch{\dist}})$ is a saddle point for the optimization problem \eqref{eq:opt-prob}.
\end{assumption}
Then defining open sets $\ch{\Uc} \subset \Uc _0 \times \Uc _1 \times \ldots \times \Uc _{N-1}$ and $\ch{\D} \subset \D _0 \times \D _1 \times \ldots \times \D _{N-1} $, with $\opt{\ch{\con}} \in \ch{\Uc}$ and $\opt{\ch{\dist}} \in \ch{\D}$,
\begin{align*}
&\J(\opt{\ch{\gr}},\opt{\ch{\vel}}, \opt{\ch{\con}}, \opt{\ch{\dist}})  \le \J(\ch{\gr},\ch{\vel}, \ch{\con}, \opt{\ch{\dist}}) \forall \ch{u} \in \ch{\Uc}, \\ 
&\J(\ch{\gr},\ch{\vel}, \opt{\ch{\con}}, {\ch{\dist}})  \le \J(\opt{\ch{\gr}},\opt{\ch{\vel}}, \opt{\ch{\con}}, \opt{\ch{\dist}}) \forall \ch{d} \in \ch{\D}.
\end{align*}

\subsection{Main Result}
\begin{theorem}
For the optimization problem in \eqref{eq:opt-prob} let the optimal control and disturbance sequence be $\opt{\ch{\con}}$ and $\opt{\ch{\dist}}$ respectively corresponding to which the state trajectory is $\opt{\ch{\gr}}$ and $\opt{\ch{\vel}}$. 
Define the Hamiltonian as 
\begin{multline*}
\upto{N-1} \times \LA^* \times (\R^{\dimg})^* \times \LG \times \R^{\dimg} \times \R^{\dimu} \times \R^{\dimd}  \ni \\ (k,\zeta,\xi,\gr,\vel,\con,\dist)  \mapsto 
\ham(k,\zeta,\xi,\gr,\vel,\con,\dist) \defn \\  -c_k(\gr,\vel,\con,\dist) + \inprod{\zeta}{\exp^{-1}(\kin(\gr,\vel))}  + \inprod{\xi}{\dyn(\gr,\vel,\con,{\dist})}.
\end{multline*}
For all $k \in \upto{N-1}$, there exist covectors  $\zeta^k \in \LA^*,\xi^k \in (\R^{\dimg})^*, $
and define 
$\opt{\gamma}_k \defn (\zeta_k,\xi_k,\opt{\gr}_k,\opt{\vel}_k,\opt{\con}_k,\opt{\con}_k)$, 
\begin{equation*} 
\rho^k \defn 
\tangostar{\id}{\left( \exp^{-1}\circ\Phi_{((\opt{\gr}_{k-1})^{-1}\opt{\gr}_k)} \right)} (\zeta^k),
\end{equation*} 
which satisfy the following necessary conditions
\enspl{H}{
\item Optimal state dynamics ($\forall k \in \upto{N}$): \label{list:CH-i}
\begin{align*}
\opt{\gr}_{k+1} &= \opt{\gr}_k \exp(\deri_{\zeta}\ham(\opt{\gamma}_k)),  \\
\opt{\vel}_{k+1} &= \deri_{\xi}\ham(\opt{\gamma}_k) ;
\end{align*}
\item Adjoint equations ($\forall k \in \upto{N-1}$): \label{list:CH-ii}
\begin{align*}
&\xi^{k-1} = \deri_{\vel}\ham(\opt{\gamma}_k), \\
&\rho^{k-1}  = \Ad_{\exp(-\deri_{\zeta} \ham(\opt{\gamma}_k))}^* \rho^k + \tangostar{\id}{\Phi_{\opt{\gr}_k}}(\tang{\opt{\gamma}_k}{\gr}{\ham} );
\end{align*}
\item Transversality relations: \label{list:CH-iii}
\begin{align*}
\xi^{N-1} &= -\deriv{\vel}{c_N}{\opt{\gr}_N,\opt{\vel}_N}, \\ 
\rho^{N-1} &=  -\tangostar{\id}{\Phi_{\opt{\gr}_N}}(\tang{(\opt{\gr}_N,\opt{\vel}_N) }{\gr}{c_N});  
\end{align*}
\item Hamiltonian ``saddle point" condition ($\forall k \in \upto{N-1}$): \label{list:CH-iv}
\begin{align*}
\inprod{\deri_{\con}\ham(\opt{\gamma}_{k-1})}{\tilde{u}_{k-1}}  \le 0 \forall \opt{u}_{k-1} + \tilde{u}_{k-1} \in \Uc_{k-1},& \\
\inprod{\deri_{\dist}\ham(\opt{\gamma}_{k-1})}{\tilde{d}_{k-1}}  \ge 0 \forall \opt{\dist}_{k-1} + \tilde{\dist}_{k-1} \in \D_{k-1}.&
\end{align*}
}
\label{th:CH}
\end{theorem}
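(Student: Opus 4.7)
The plan is to reduce the saddle-point problem to two independent one-sided optimal control problems and then invoke the known discrete-time PMP on matrix Lie groups (Theorem~\ref{th:CH-u}) for each. Specifically, I will follow the procedure \ref{list:SP1}--\ref{list:SP3} derived from Proposition~\ref{prop:saddle-point}: freezing $\ch{\dist}$ at $\opt{\ch{\dist}}$ yields the \emph{minimisation} subproblem
\[
    \min_{\ch{\con}\in\ch{\Uc}}\; \J(\ch{\gr},\ch{\vel},\ch{\con},\opt{\ch{\dist}}) \quad \text{s.t. system \eqref{eq:system} with } \dist_k = \opt{\dist}_k,
\]
and freezing $\ch{\con}$ at $\opt{\ch{\con}}$ yields the \emph{maximisation} subproblem, which we recast as a minimisation of $-\J(\opt{\ch{\con}},\ch{\dist})$. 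Both subproblems fit the template of \eqref{eq:opt-prob-u}, so Theorem~\ref{th:CH-u} applies directly to each.

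First I would apply Theorem~\ref{th:CH-u} to the minimisation subproblem, obtaining covectors $(\zeta_c^k,\xi_c^k)$, a scalar multiplier $\nu_c \le 0$, a Hamiltonian $\ham_c$, and the full set of necessary conditions \ref{list:CH-i}--\ref{list:CH-iv} restricted to variations in $\con$ only. Then I would apply the same theorem to the (sign-flipped) maximisation subproblem, obtaining a second set of covectors $(\zeta_d^k,\xi_d^k)$, multiplier $\nu_d \le 0$, Hamiltonian $\ham_d$, the same structural conditions, but now with the Hamiltonian non-positive-gradient condition giving a bound in the $\dist$ variable that — after undoing the sign flip — becomes the second inequality in~\ref{list:CH-iv}. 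At this point both subproblems share the identical optimal trajectory $(\opt{\ch{\gr}},\opt{\ch{\vel}})$, so the adjoint recursions in~\ref{list:CH-ii} and the transversality conditions in~\ref{list:CH-iii} for each are driven by the same linearised system data evaluated along the same nominal trajectory.

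The central step, and main obstacle, is to merge the two sets of multipliers into a single covector pair $(\zeta^k,\xi^k)$ and a single Hamiltonian $\ham$ of the form stated in the theorem. The key observation is that the backward recursions and terminal conditions for $(\xi_c,\rho_c)$ and $(\xi_d,\rho_d)$ are, up to the scalar abnormal multiplier $\nu_c$ respectively $\nu_d$, identical linear difference equations with identical inhomogeneous forcing (coming from $\tango{}{c_k}$). From this I would argue that both $\nu_c$ and $\nu_d$ must be strictly negative: if, say, $\nu_c = 0$, then the non-triviality clause of Theorem~\ref{th:CH-u} forces the covectors $(\zeta_c^k,\xi_c^k)$ to be non-trivial; but with $\nu_c=0$ the backward recursion and terminal conditions become a homogeneous linear system whose unique solution is the zero sequence, a contradiction. (Here I use that $\kin$ is a diffeomorphism onto $\exp(\Og)$ and $\dyn$ is smooth, so the linearisations are well-defined and the backward recursion is invertible at each step.) Having established $\nu_c,\nu_d<0$, I can normalise each to $-1$ by rescaling the covectors, after which the merged conditions collapse to the statements in~\ref{list:CH-i}--\ref{list:CH-iv} with the single Hamiltonian $\ham$ containing $-c_k$ in place of $\nu\tilde{c}_k$.

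Finally, I would verify that the two normalised covector sequences coincide: after normalisation both satisfy the same linear backward difference equation with the same terminal condition $-\tangostar{}{\Phi_{\opt{\gr}_N}}(\tango{}{c_N})$ and $-\deri_\vel c_N$, so by uniqueness of the backward recursion they are equal, yielding a single pair $(\zeta^k,\xi^k)$. The Hamiltonian saddle-point condition \ref{list:CH-iv} then emerges naturally as the union of the two Hamiltonian non-positive-gradient conditions from the two subproblems, completing the proof. Throughout, the definition of $\rho^k$ via the pullback through $\exp^{-1}\circ\Phi_{(\opt{\gr}_{k-1})^{-1}\opt{\gr}_k}$ is inherited verbatim from Theorem~\ref{th:CH-u}.
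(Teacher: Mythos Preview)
Your proposal is correct and follows essentially the same three-step strategy as the paper: decompose the saddle-point problem into the two one-sided optimal control problems via Proposition~\ref{prop:saddle-point}, apply Theorem~\ref{th:CH-u} to each, rule out the abnormal case by showing that $\nu_c=0$ (resp.\ $\nu_d=0$) forces all covectors to vanish via the homogeneous backward recursion, normalise both multipliers to $-1$, and then merge. The only caveat is a sign bookkeeping slip in your final paragraph: after normalising both multipliers to $-1$, the two covector sequences are \emph{negatives} of one another (the maximisation subproblem carries $-c_k$ and $-c_N$, so its terminal data are $+\deri_\vel c_N$ and $+\tangostar{\id}{\Phi_{\opt{\gr}_N}}(\tang{}{\gr}{c_N})$), not equal---the paper records this explicitly as $\check{\zeta}^k=-\hat{\zeta}^k$, $\check{\xi}^k=-\hat{\xi}^k$---but this is immaterial since one then simply sets $\zeta^k:=\check{\zeta}^k$.
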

\begin{proof}
Section \ref{sec:proof}.
\end{proof}

\begin{remark}
A comment on the notation used here, which is heavy in differential geometric jargon:
\en{
\item Condition \ref{list:CH-iv} does not strictly denote a saddle point in the most general case. However, under suitable assumptions on $\ham$ it does, for instance, if the Hamiltonian is convex in $\con$ and concave in $\dist$. 
\item The derivative of $\ham$ with respect to $\vel, \xi, \zeta$ are well understood since these entities lie in vector spaces.
\item Note that $\rho,\zeta$ can be written as elements of $\R^{\dimem \times \dimem}$. Since a co-vector can be easily visualised by its action on a vector, we will perform some calculations to make this apparent. To this end, let $v \in \LA$ be arbitrary and let $A \defn \exp(-\deri_{\zeta} \ham(\opt{\gamma}_k))$. To understand the relation between $\rho^{k-1}$ and $\rho^k$,
\begin{align*}
&\inprod{\rho^{k-1}}{v} = \inprod{\Ad_{A}^* \rho^k + \tangostar{\id}{\Phi_{\opt{\gr}_k}}(\tang{\opt{\gamma}_k }{\gr}{\ham} )}{v}  \\
&= \inprod{\rho^k}{\Ad_{A} v} + \inprod{\tang{\opt{\gamma}_k }{\gr}{\ham}}{\tango{\id}{\Phi_{\opt{\gr}_k}} \cdot v} \\
&=\tr\left((\rho^k)^{\top}(AvA)\right) + \inprod{\tang{\opt{\gamma}_k }{\gr}{\ham}}{\opt{\gr}_k  v} \\
&=\tr\left((\rho^k)^TAvA\right) + \left. \frac{d}{ds} \right\rvert_{s=0} \ham\left(\opt{\gr}_k \exp(vs)\right)
\end{align*}
To understand the relation between $\rho^k$ and $\zeta^k$
\begin{align*}
&\inprod{\rho^{k}}{v} = \inprod{\tangostar{\id}{\left( \exp^{-1}\circ\Phi_{((\opt{\gr}_{k-1})^{-1}\opt{\gr}_k)} \right)} (\zeta^k)}{v}  \\
&= \inprod{ \zeta^k}{\tango{\id}{\left( \exp^{-1}\circ\Phi_{((\opt{\gr}_{k-1})^{-1}\opt{\gr}_k)} \right)}\cdot v} \\
&=\inprod{\zeta^k}{\left. \frac{d}{ds} \right\rvert_{s=0}  \exp^{-1}\circ\Phi_{((\opt{\gr}_{k-1})^{-1}\opt{\gr}_k)} (\exp(vs))} \\
&=\inprod{\zeta^k}{\left. \frac{d}{ds} \right\rvert_{s=0}  \exp^{-1}\left((\opt{\gr}_{k-1})^{-1}\opt{\gr}_k \exp(vs)\right)} \\
&= \tr\left( (\zeta^k)^{\top}\left. \frac{d}{ds} \right\rvert_{s=0}  \exp^{-1}\left((\opt{\gr}_{k-1})^{-1}\opt{\gr}_k \exp(vs)\right) \right) 
\end{align*}
}
\end{remark}

\subsection{Specialisation to Euclidean Spaces}
Let the control system \eqref{eq:kinematics},\eqref{eq:dynamics} evolve on $\R^{\dimg}$, i.e. $\LG = \R^{\dimg}$ (although we require $\LG$ to be a matrix Lie group, as noted in Remark \ref{rem:LG-gen}, the part of our presentation made in full generality holds for any finite dimensional Lie group). We will introduce new notation for the cost and dynamics to avoid clashing with previous notation. Since the system evolves on a Euclidean space in entirety, we club all state variables into $\vel \in \R^n$ and kinematics and dynamics into  $\dyneuc : \R^{\dimg} \cross \R^{\dimu} \cross \R^{\dimd} \ra \R^{\dimg}$ .
For complete clarity, we shall restate the specific form of the control system
\begin{align}
\vel_{k+1} &= \dyneuc(\vel_k,\con_k, \dist_k) \: & \forall k \in \upto{N-1}, \label{eq:euc-system} 
\end{align} 
and optimal control problem
\begin{subequations}
\label{eq:opt-prob-euc}
\begin{alignat}{2}
 \min\limits_{\ch{\con}} \max\limits_{\ch{\dist}} & \quad  && \Jeuc(\ch{\vel},\ch{\con},\ch{d}) \defn  \sum\limits_{k=0}^{N-1}{\ceuc_k(\vel_k,\con _k,\dist _k)} \notag \\
 &&& \qquad \qquad \qquad \qquad \qquad + \ceuc_N(\vel_N) \\
\text{s.t. } & && \text{system } \eqref{eq:euc-system}, \\
& & & \con _k \in \Uc _k, \dist_ k \in \D _k  \:  \forall k \in \upto{N-1}, \\
& & &\vel_0 = \bar{\vel}_0, 
\end{alignat}
\end{subequations} 
where $\R^{\dimg} \cross \R^{\dimu} \ni (\vel,\con) \mapsto \ceuc_k(\vel,\con) \in \R$, and $\R^{\dimg} \ni \vel \mapsto \ceuc_N(\gr,\vel) \in \R$ 
are suitable smooth functions $\forall k \in \upto{N-1}$, and the remaining notation is same as List \ref{list:Sysi},\ref{list:Sysiii} and List \ref{list:Nii},\ref{list:Niii},\ref{list:Niv} (accommodating for the change of Lie group $\LG$ to $\R^n$).

\begin{corollary}
For the optimisation problem in \eqref{eq:opt-prob-euc} let the optimal control and disturbance sequence be $\opt{\ch{\con}}$ and $\opt{\ch{\dist}}$ respectively corresponding to which the state trajectory is $\opt{\ch{\gr}}$ and $\opt{\ch{\vel}}$. 
Define the Hamiltonian as 
\begin{multline*}
\upto{N-1} \times \R^{\dimg} \times \R^{\dimg} \times \R^{\dimu} \times \R^{\dimd} \ni  
(k,\xi,\vel,\con,\dist)  \mapsto \\ 
\ham(k,\xi,\vel,\con,\dist) \defn  
-\ceuc_k(\vel,\con)  + \inprod{\xi}{\dyneuc(\vel,\con,{\dist})}.
\end{multline*}
For $k \in \upto{N-1}$, there exist covectors  $\xi^k \in \R^{\dimg}, $
and define 
$\opt{\gamma}_k \defn (\xi_k,\opt{\vel}_k,\opt{\con}_k,\opt{\dist}_k)$, 
which satisfy the following necessary conditions
\en{
\item Optimal state dynamics ($\forall k \in \upto{N}$): 
\begin{align*}
\opt{\vel}_{k+1} &= \deri_{\xi}\ham(\opt{\gamma}_k);   
\end{align*}
\item Adjoint equations ($\forall k \in \upto{N-1}$): 
\begin{align*}
&\xi^{k-1} = \deri_{\vel}\ham(\opt{\gamma}_k); 
\end{align*}
\item Transversality relations: 
\begin{align*}
\xi^{N-1} &= -\deriv{\vel}{\ceuc_N}{\opt{\vel}_N}; 
\end{align*}
\item Hamiltonian ``saddle point condition" ($\forall k \in \upto{N-1}$): 
\begin{align*}
\inprod{\deri_{\con}\ham(\opt{\gamma}_{k-1})}{\tilde{u}_{k-1}}  \le 0 \forall \opt{u}_{k-1} + \tilde{u}_{k-1} \in \Uc_{k-1},& \\
\inprod{\deri_{\dist}\ham(\opt{\gamma}_{k-1})}{\tilde{d}_{k-1}}  \ge 0 \forall \opt{\dist}_{k-1} + \tilde{\dist}_{k-1} \in \D_{k-1}.&
\end{align*}
}
\end{corollary}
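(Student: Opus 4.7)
The plan is to derive the Corollary as a direct specialization of Theorem \ref{th:CH}, rather than re-running the entire proof in the Euclidean setting. I would realize $\R^{\dimg}$ as an abelian matrix Lie group via the standard embedding $v \mapsto \bigl(\begin{smallmatrix} I_{\dimg} & v \\ 0 & 1 \end{smallmatrix}\bigr) \in GL(\dimg+1,\R)$, so that group multiplication is vector addition, $\LA \cong \R^{\dimg}$, $\exp$ is the identity map, and Assumption \ref{assn:exp} holds with $\Og = \R^{\dimg}$.

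To fit the two-tier state structure of Theorem \ref{th:CH} (configuration $\gr \in \LG$ plus velocity $\vel \in \R^{\dimg}$), I would introduce a dummy configuration variable on $\LG = \R^{\dimg}$ together with trivial kinematics $\kin \equiv 0$ (the additive identity), so that $\gr_{k+1} = \gr_k$ and the configuration component is frozen at $\bar{\gr}_0$. I would then set $\dyn(\gr,\vel,\con,\dist) \defn \dyneuc(\vel,\con,\dist)$ and $c_k(\gr,\vel,\con,\dist) \defn \ceuc_k(\vel,\con)$, both independent of $\gr$, and take $c_N(\gr,\vel) \defn \ceuc_N(\vel)$. Substituting into the Hamiltonian of Theorem \ref{th:CH}, the term $\inprod{\zeta}{\exp^{-1}(\kin(\gr,\vel))}$ vanishes identically since $\exp^{-1}(0) = 0$, and what remains is exactly the Hamiltonian stated in the Corollary.

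The remaining work is to check that each necessary condition of the Corollary is inherited from its counterpart in Theorem \ref{th:CH}. The state dynamics for $\opt{\vel}_{k+1}$ and the $\xi$-adjoint equation and transversality reduce immediately once one identifies $\tspace{x}{\R^{\dimg}} \cong \R^{\dimg}$ and translates tangent maps into standard derivatives. The $\gr$-dynamics $\opt{\gr}_{k+1} = \opt{\gr}_k \exp(\deri_\zeta \ham) = \opt{\gr}_k$ is consistent and inert. For the $\rho$-adjoint, the key observation is that $\R^{\dimg}$ is abelian, so $\Ad_g = \mathrm{id}$ for every $g$, and since neither $c_k$ nor $\dyn$ depends on $\gr$, the term $\tang{\opt{\gamma}_k}{\gr}{\ham}$ is zero; consequently $\rho^{k-1} = \rho^k$, and the transversality forces $\rho^{N-1} = 0$, so $\rho \equiv 0$ and decouples. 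The saddle-point conditions on $\deri_\con \ham$ and $\deri_\dist \ham$ transfer verbatim.

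The one step that requires genuine care, and which I expect to be the main (though mild) obstacle, is the bookkeeping that turns all the differential-geometric objects appearing in Theorem \ref{th:CH} into their flat counterparts: verifying that $\tango{x}{\Phi_g}$ collapses to the identity, that $\tangostar{\id}{\Phi_g}$ acts as the identity on covectors, that $\Ad^*$ is trivial, and that the exponential and its inverse disappear from every formula. Each of these is routine on $\R^{\dimg}$ but must be confirmed line-by-line to ensure that the compact statement of the Corollary is a faithful specialization of the full Lie-group statement.
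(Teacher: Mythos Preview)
Your proposal is correct and matches the paper's approach: the paper's proof is literally the one-line statement ``Direct application of Theorem \ref{th:CH} on optimisation problem \eqref{eq:opt-prob-euc} (noting en route the change of $\LG$ to $\R^{\dimg}$)'', followed by a remark recording exactly the simplifications you list (group operation becomes addition, $\exp$ becomes the identity, $\Ad$ trivialises, tangent maps become ordinary derivatives). Your device of carrying a dummy configuration variable with $\kin\equiv 0$ to respect the two-tier $(\gr,\vel)$ structure of Theorem \ref{th:CH} is a clean way to make the specialization precise, and is slightly more explicit than the paper's terse ``club all state variables into $\vel$'', but the substance is identical.
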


\begin{proof}
Direct application of Theorem \ref{th:CH} on optimisation problem \eqref{eq:opt-prob-euc} (noting en route the change of $\LG$ to $\R^{\dimg}$).
\end{proof}

\begin{remark}
We make some observations that simplify computations when $\LG = \R^{\dimg}$. The group multiplication is then vector space addition on $\R^{\dimg}$ i.e. $\Phi(\vel_1,\vel_2) = \vel_1 + \vel_2 \forall \vel_1,\vel_2 \in \R^{\dimg}$, the $\exp$ map is the identity map \cite[Chapter 9, Page 274]{marsden}, and it is clear from the definition of the adjoint action that it simplifies to the identity transformation as well. Tangent maps simplify to derivatives, and for any $\vel \in \R^{\dimg}$, $\tango{\vel}{\R^{\dimg}} \cong \R^{\dimg}$ (in particular, $\LA \cong \R^{\dimg}$) and $\left(\tango{\vel}{\R^{\dimg}}\right)^* \cong \R^{\dimg}$ (in particular, $\LA^* \cong \R^{\dimg}$). 
\end{remark}

\section{Proof of Theorem \ref{th:CH}}
\label{sec:proof}
The proof will be based on the methodology detailed 
in List \ref{list:SP1},\ref{list:SP2},\ref{list:SP3}. The sketch of proof is as follows: 
\enspl{{\it Step}}{
\item \label{list:Step-i} {\bf Minimisation:} Freeze $\ch{\dist}$ at $\opt{\ch{\dist}}$. Get necessary conditions on $\ch{\gr},\ch{\vel},\ch{\con}$ for minimisation over $\ch{\con}$ using Theorem \ref{th:CH-u}.
\item \label{list:Step-ii} {\bf Maximisation:} Freeze $\ch{\con}$ at $\opt{\ch{\con}}$. Get necessary conditions on  $\ch{\gr},\ch{\vel},\ch{\dist}$ for maximising over $\ch{\dist}$ using Theorem \ref{th:CH-u}.
\item \label{list:Step-iii} {\bf Amalgamation:} We will coherently merge the results from the two previous steps into a single set of necessary conditions, which will have a single Hamiltonian.
}

\subsection{\ref{list:Step-i} Minimisation}
Freezing $\ch{\dist}$ at $\opt{\ch{\dist}}$, the optimisation problem \eqref{eq:opt-prob} becomes
\begin{subequations}
\label{eq:opt-prob-con}
\begin{alignat}{2}
 \min\limits_{\ch{\con}} & \quad  && \J(\ch{\gr},\ch{\vel},\ch{\con},\opt{\ch{d}}) \\
\text{s.t.} & && \text{system } \eqref{eq:system} \text{ with } \dist_k = \opt{\dist}_k \forall k \in \upto{N-1}, \\
& & & \con _k \in \Uc _k \: \: \forall k \in \upto{N-1},  \\
& & &\gr_0 = \bar{\gr}_0,\vel_0 = \bar{\vel}_0. 
\end{alignat}
\end{subequations}

For the optimisation problem in \eqref{eq:opt-prob-con} let the optimal control sequence be $\opt{\ch{\con}}$ corresponding to which the state trajectory is $\opt{\ch{\gr}}$ and $\opt{\ch{\vel}}$.  Applying Theorem \ref{th:CH-u}, we define the Hamiltonian for the control case as (for $\check{\nu} \in \R$): 
\begin{align}
&\upto{N-1} \times \LA^* \times (\R^{\dimg})^* \times \LG \times \R^{\dimg} \times \R^{\dimu}  \ni (k,\check{\zeta},\check{\xi},\gr,\vel,\con) \notag \\  
&\mapsto \hamu^{\check{\nu}}(k,\check{\zeta},\check{\xi},\gr,\vel,\con) \defn \check{\nu} c_k(\gr,\vel,\con,\opt{\dist}_k) \notag \\ 
 & \qquad \quad + \inprod{\check{\zeta}}{\exp^{-1}(\kin(\gr,\vel))}  + \inprod{\check{\xi}}{\dyn(\gr,\vel,\con,\opt{\dist}_k)}.
\label{eq:ham-con}
\end{align}
For all $k \in \upto{N-1}$, there exist covectors $\check{\zeta}^k \in \LA^*$, $\check{\xi}^k \in (\R^{\dimg})^*$ and define $\opt{\check{\gamma}}_k \defn (\check{\zeta}_k,\check{\xi}_k,\opt{\gr}_k,\opt{\vel}_k,\opt{\con}_k)$, 
\begin{equation*}
\check{\rho}^k \defn  
\tangostar{\id}{\left( \exp^{-1}\circ\Phi_{((\opt{\gr}_{k-1})^{-1}\opt{\gr}_k)} \right)} (\check{\zeta}^k),
\end{equation*} 
which satisfy the following necessary conditions 
\enspl{$u$}{
\item Optimal state dynamics ($\forall k \in \upto{N}$): \label{list:m-i}
\begin{align*}
\opt{\gr}_{k+1} &= \opt{\gr}_k \exp\left(\deriv{\check{\zeta}}{\hamu^{\check{\nu}}}{\opt{\check{\gamma}}_k}\right),  \\
\opt{\vel}_{k+1} &= \deriv{\check{\xi}}{\hamu^{\check{\nu}}}{\opt{\check{\gamma}}_k} ;
\end{align*}
\item Adjoint equations ($\forall k \in \upto{N-1}$): \label{list:m-ii}
\begin{align}
\check{\xi}^{k-1} &= \deriv{\vel}{\hamu^{\check{\nu}}}{\opt{\check{\gamma}}_k}, \label{eq:covecprop-w-con} \\
\check{\rho}^{k-1}  &= \Ad_{\exp(-\deriv{\check{\zeta}}{\hamu^{\check{\nu}}}{\opt{\check{\gamma}}_k})}^* \check{\rho}^k \notag  \\ & \qquad \qquad + \tangostar{\id}{\Phi_{\opt{\gr}_k}}\Big(\tang{\opt{\check{\gamma}}_k }{\gr}{\hamu^{\check{\nu}}} \Big); \label{eq:covecprop-g-con}
\end{align}
\item Transversality relations: \label{list:m-iii}
\begin{align}
\check{\xi}^{N-1} &= \check{\nu}\deriv{\vel}{c_N}{\opt{\gr}_N,\opt{\vel}_N},  \label{eq:trans-w-con} \\
\check{\rho}^{N-1} &=  \check{\nu}\tangostar{\id}{\Phi_{\opt{\gr}_N}}(\tang{(\opt{\gr}_N,\opt{\vel}_N )}{\gr}{c_N}); \label{eq:trans-g-con}
\end{align}
\item Hamiltonian non-positive gradient condition ($\forall k \in \upto{N-1}$): \label{list:m-iv}
\begin{align*}
\inprod{\deriv{\con}{\hamu^{\check{\nu}}}{\opt{\check{\gamma}}_{k-1}}}{\tilde{\con}_{k-1}} \le 0 \forall \opt{\con}_{k-1} + \tilde{\con}_{k-1} \in \Uc_{k-1};
\end{align*}
\item Non-triviality: $\check{\nu} \leq 0$ and if $\check{\nu} = 0$ then at least one of the covectors is non-zero. \label{list:m-v}
}

\subsection{\ref{list:Step-ii} Maximisation}
Having dealt with the minimisation problem, we move on to the maximisation problem. Freezing $\ch{\con}$ at $\opt{\ch{\con}}$, the maximisation problem corresponding to \eqref{eq:opt-prob} is
\begin{subequations}
\label{eq:opt-prob-dist}
\begin{alignat}{2}
 \min\limits_{\ch{\dist}} & \quad  && -\J(\ch{\gr},\ch{\vel},\opt{\ch{\con}},\ch{d}) \\
\text{s.t. } & && \text{system } \eqref{eq:system} \text{ with } \con_k = \opt{\con}_k \forall k \in \upto{N-1}, \\
& & & \dist_ k \in \D _k  \: \: \forall k \in \upto{N-1},  \\
& & &\gr_0 = \bar{\gr}_0,\vel_0 = \bar{\vel}_0. 
\end{alignat}
\end{subequations}

For the optimisation problem in \eqref{eq:opt-prob-dist} let the optimal disturbance sequence be $\opt{\ch{\dist}}$ corresponding to which the state trajectory is $\opt{\ch{\gr}}$ and $\opt{\ch{\vel}}$.  Applying Theorem \ref{th:CH-u}, we define the Hamiltonian for the disturbance case as (for $\hat{\nu} \in \R$): 
\begin{align}
&\upto{N-1} \times (\LA)^* \times (\R^{\dimg})^* \times \LG \times \R^{\dimg} \times \R^{\dimd} \ni (k,\hat{\zeta},\hat{\xi},\gr,\vel,\dist) \notag \\ & \mapsto   
\hamd^{\hat{\nu}}(k,\hat{\zeta},\hat{\xi},\gr,\vel,\dist) \defn -\hat{\nu} c_k(\gr,\vel,\opt{\con},\dist) \notag \\ 
& \qquad \quad +  
\inprod{\hat{\zeta}}{\exp^{-1}(\kin(\gr,\vel)}   + \inprod{\hat{\xi}}{\dyn(\gr,\vel,\opt{\con}_k,{\dist})}.
\label{eq:ham-dist}
\end{align}

For all $k \in \upto{N-1}$ there exist covectors $\hat{\zeta}^k \in \LA^*$, $\hat{\xi}^k \in (\R^{\dimg})^*$, and define $\opt{\hat{\gamma}}_k \defn (\hat{\zeta}_k,\hat{\xi}_k,\opt{\gr}_k,\opt{\vel}_k,\opt{\dist}_k)$, 
\begin{equation*}
\hat{\rho}^k \defn 
\tangostar{\id}{\left( \exp^{-1}\circ\Phi_{((\opt{\gr}_{k-1})^{-1}\opt{\gr}_k)} \right)} (\hat{\zeta}^k),
\end{equation*}
which satisfy the following necessary conditions 
\enspl{$d$}{
\item Optimal state dynamics ($\forall k \in \upto{N}$):
\begin{align*}
\opt{\gr}_{k+1} &= \opt{\gr}_k \exp(\deriv{\zeta}{\hamd^{\hat{\nu}}}{\opt{\hat{\gamma}}_k}),  \\
\opt{\vel}_{k+1} &= \deriv{\xi}{\hamd^{\hat{\nu}}}{\opt{\hat{\gamma}}_k}; 
\end{align*}
\item Adjoint equations ($\forall k \in \upto{N-1}$): \label{list:M-ii}
\begin{align}
\hat{\xi}^{k-1} &= \deriv{\vel}{\hamd^{\hat{\nu}}}{\opt{\hat{\gamma}}_k},  \label{eq:covecprop-w-dist} \\
\hat{\rho}^{k-1}  &= \Ad_{\exp(-\deriv{\hat{\zeta}} {\hamd^{\hat{\nu}}}{\opt{\gamma}_k})}^* \rho^k \notag \\ & \qquad \qquad + \tangostar{\id}{\Phi_{\opt{\gr}_k}}\Big(\tang{\opt{\gamma}_k }{\gr}{ \hamd} \Big); \label{eq:covecprop-g-dist}
\end{align}
\item Transversality conditions: \label{list:M-iii}
\begin{align}
\hat{\xi}^{N-1} &= -\hat{\nu}\deriv{\vel}{{c}_N}{\opt{\gr}_N,\opt{\vel}_N},  \label{eq:trans-w-dist} \\
\hat{\rho}^{N-1} &=  -\hat{\nu}\tangostar{\id}{\Phi_{\opt{\gr}_N}}(\tang{(\opt{\gr}_N,\opt{\vel}_N )}{\gr}{{c}_N});  \label{eq:trans-g-dist}
\end{align}
\item Hamiltonian non-positive gradient condition ($\forall k \in \upto{N-1}$): \label{list:M-iv}
\begin{align*}
\inprod{\deriv{\dist}{\hamd^{\hat{\nu}}}{\opt{\hat{\gamma}}_{k-1}}}{\tilde{d}_{k-1}} \le 0 \forall \opt{\dist}_{k-1} + \tilde{\dist}_{k-1} \in \D_{k-1};
\end{align*}
\item Non-triviality: $\hat{\nu} \leq 0$ and if $\hat{\nu} = 0$ then at least one of the covectors is non-zero. \label{list:M-v}
}

\subsection{\ref{list:Step-iii} Amalgamation}

Up until now, we have two Hamiltonians $\hamu^{\check{\nu}}$ and $\hamd^{\hat{\nu}}$, and two sequences of covectors per Hamiltonian. The equations for the control case were parametrised by the optimal disturbance sequence and vice versa which makes the problem extremely hard to solve since they are two two-point boundary value problems paramatrised by the solutions of each other. However, as we will show, there emerges a single Hamiltonian and a single set of covectors which begins to suggest that the problem could be tractable. We establish this through the following steps 
\begin{itemize}
    \item Show that abnormal multipliers are non-zero
    \item Define the common Hamiltonian
    \item Covectors of both problems are negatives of each other, and can be obtained from the common Hamiltonian
    \item Obtain the ``saddle point" condition on the common Hamiltonian
    \end{itemize}

We begin with an observation which we will use often.
\begin{remark}
For all $k \in \upto{N-1}$, $\check{\zeta}^{k}$ and $\hat{\zeta}^{k}$ are related to $\check{\rho}^{k}$ and $\hat{\rho}^{k}$  through bijective linear maps, and in particular, $\check{\zeta}^{k} = 0 \iff \check{\rho}^{k} = 0$ and $\hat{\zeta}^{k} = 0 \iff \hat{\rho}^{k} = 0$.
\label{rem:bij-lin}
\end{remark}

\begin{proposition}
If $\check{\nu} = 0$, then $\check{\zeta}^{k} = 0$ and  $\check{\xi}^{k} = 0$ $\forall k \in \upto{N-1}$.
\label{prop:zero}
\end{proposition}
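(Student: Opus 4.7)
The plan is to argue by backward induction on $k \in \upto{N-1}$, exploiting the fact that when $\check{\nu} = 0$ the Hamiltonian
\[
\hamu^{0}(k,\check{\zeta},\check{\xi},\gr,\vel,\con) = \inprod{\check{\zeta}}{\exp^{-1}(\kin(\gr,\vel))} + \inprod{\check{\xi}}{\dyn(\gr,\vel,\con,\opt{\dist}_k)}
\]
is \emph{linear} in the covector pair $(\check{\zeta},\check{\xi})$. Consequently, every state-derivative of $\hamu^{0}$ appearing on the right-hand sides of the adjoint equations \eqref{eq:covecprop-w-con}--\eqref{eq:covecprop-g-con} is itself a linear functional of $(\check{\zeta}^{k},\check{\xi}^{k})$ and vanishes whenever both covectors vanish. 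Combined with the transversality relations, this is enough to drag the zero value all the way back from $k = N-1$ to $k = 0$.

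For the base case, I would set $\check{\nu} = 0$ in \eqref{eq:trans-w-con} and \eqref{eq:trans-g-con}, which immediately yield $\check{\xi}^{N-1} = 0$ and $\check{\rho}^{N-1} = 0$; Remark \ref{rem:bij-lin} then converts $\check{\rho}^{N-1} = 0$ into $\check{\zeta}^{N-1} = 0$.

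For the induction step, suppose $\check{\zeta}^{k} = 0$ and $\check{\xi}^{k} = 0$, so that $\check{\rho}^{k} = 0$ by Remark \ref{rem:bij-lin}. In \eqref{eq:covecprop-w-con} the quantity $\deriv{\vel}{\hamu^{0}}{\opt{\check{\gamma}}_k}$ is linear in $(\check{\zeta}^{k},\check{\xi}^{k}) = (0,0)$ and therefore vanishes, giving $\check{\xi}^{k-1} = 0$. In \eqref{eq:covecprop-g-con} the $\Ad^{*}$ term is zero because $\check{\rho}^{k} = 0$, while the cotangent-lift term $\tangostar{\id}{\Phi_{\opt{\gr}_k}}(\tang{\opt{\check{\gamma}}_k}{\gr}{\hamu^{0}})$ is zero by the same linearity observation; hence $\check{\rho}^{k-1} = 0$ and, by Remark \ref{rem:bij-lin} once more, $\check{\zeta}^{k-1} = 0$. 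The induction closes.

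The only step that needs to be checked with care is the assertion that differentiation in $\gr$ via the tangent map on $\LG$ preserves linearity in $(\check{\zeta},\check{\xi})$. This is routine: the $\gr$-dependence of $\hamu^{0}$ is confined to the second slots of the two inner products, so $\check{\zeta}$ and $\check{\xi}$ pass through the tangent-map computation as multiplicative covectors. No deeper obstacle is anticipated.
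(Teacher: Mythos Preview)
Your proposal is correct and follows essentially the same backward-induction argument as the paper: transversality gives the base case $k=N-1$, and the adjoint equations \eqref{eq:covecprop-w-con}--\eqref{eq:covecprop-g-con} propagate the vanishing of $(\check{\zeta}^{k},\check{\xi}^{k})$ (equivalently $\check{\rho}^{k}$, via Remark~\ref{rem:bij-lin}) to index $k-1$. Your explicit framing in terms of the linearity of $\hamu^{0}$ in the covector pair is precisely what the paper uses implicitly when it asserts that the state-derivatives of $\hamu^{\check{\nu}}$ vanish under the induction hypothesis.
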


\begin{proof}
This proof will utilise mathematical induction.

{\it Base case}: If $\check{\nu} = 0$ then   \eqref{eq:trans-g-con}, \eqref{eq:trans-w-con} and Remark \ref{rem:bij-lin} yield $\check{\zeta}^{N-1} = 0$ and $\check{\xi}^{N-1} =0$ respectively. 

{\it Induction hypothesis}: Assume that the proposition holds for $k = N-1,N-2,\ldots,i$ for any $i \in \{ N-2, N-3, \ldots, 1\}$. We will show it also holds for $k = i-1$. 

{\it Induction step}: The induction hypothesis makes 
\begin{gather*}
\deriv{\vel}{\hamu^{\check{\nu}}}{\opt{\check{\gamma}}_{i}} = 0 , \quad \tang{\opt{\check{\gamma}}_{i}}{\gr} {\hamu^{\check{\nu}}} = 0.
\end{gather*}
Therefore from \eqref{eq:covecprop-g-con}, \eqref{eq:covecprop-w-con} and Remark \ref{rem:bij-lin},   $\zeta^{i-1} = 0$ and $\xi^{i-1} = 0$ respectively.
This completes the induction.
\end{proof}

The foregoing result also shows that $\check{\nu} = 0$ is not allowed since it violates the non-trviality condition \ref{list:m-v}. It follows that $\check{\nu} < 0$. It can similarly be shown that $\hat{\nu} < 0$. For convenience, we let $\check{\nu}=\hat{\nu}=-1$ and scale the covectors in both problems accordingly. See Appendix \ref{app:scaling} for details of scaling $\check{\nu}$, and a similar procedure follows for $\hat{\nu}$.
Define the common Hamiltonian
\begin{align*}
&\upto{N-1} \times \R^{\dimg} \times \R^{\dimg} \times \LG \times \R^{\dimg} \times \R^{\dimu} \times \R^{\dimd} \ni \\ 
&(k,\zeta,\xi,\gr,\vel,\con,\dist) \mapsto  
\ham(k,\zeta,\xi,\gr,\vel,\con,\dist)  
\defn \\&  -c_k(\gr,\vel,\con)  + \inprod{\zeta}{\kin(\gr,\vel)}  + \inprod{\xi}{\dyn(\gr,\vel,\con,{\dist})}.
\end{align*}
We will proceed to establish conditions \ref{list:CH-ii}, \ref{list:CH-iii}.
\begin{proposition}
For all $k \in \upto{N-1}$
\begin{enumerate}[label = {\rm (\roman*)}]
\item $\check{\zeta}^{k} = -\hat{\zeta}^{k}$ and $\check{\xi}^k = -\hat{\xi}^k$ $\forall k \in \upto{N-1}$
\item There exist covectors  $\zeta^k ,\xi^k \in \R^{\dimg}$ and 
\begin{equation*} 
\rho^k \defn 
\tangostar{\id}{\left( \exp^{-1}\circ\Phi_{((\opt{\gr}_{k-1})^{-1}\opt{\gr}_k)} \right)} (\zeta^k)
\end{equation*}
which satisfy conditions \ref{list:CH-ii}, \ref{list:CH-iii}.
\end{enumerate}
\end{proposition}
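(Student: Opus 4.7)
The plan is to establish (i) by backward induction on $k$ and then to deduce (ii) by setting $\zeta^k := \check{\zeta}^k$, $\xi^k := \check{\xi}^k$, $\rho^k := \check{\rho}^k$. The whole argument rests on a single sign-flip symmetry between the two scaled Hamiltonians. After the normalisation $\check{\nu} = \hat{\nu} = -1$, direct inspection of \eqref{eq:ham-con} and \eqref{eq:ham-dist} along the optimal trajectory yields
\[
\hamd^{-1}(k, -z, -w, \gr, \vel, \opt{\dist}_k) \;=\; -\,\hamu^{-1}(k, z, w, \gr, \vel, \opt{\con}_k)
\]
for every $(z, w, \gr, \vel)$, because the cost term enters with opposite signs in the two Hamiltonians while the pairings $\inprod{\hat{\zeta}}{\exp^{-1}(\kin(\cdot))}$ and $\inprod{\hat{\xi}}{\dyn(\cdot)}$ flip under negation of the covector arguments.

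For the base case $k = N-1$, the scaled transversality relations \eqref{eq:trans-w-con}--\eqref{eq:trans-g-con} and \eqref{eq:trans-w-dist}--\eqref{eq:trans-g-dist} immediately give $\check{\xi}^{N-1} = -\hat{\xi}^{N-1}$ and $\check{\rho}^{N-1} = -\hat{\rho}^{N-1}$, and Remark \ref{rem:bij-lin} upgrades the latter to $\check{\zeta}^{N-1} = -\hat{\zeta}^{N-1}$. For the inductive step, assume the identities hold at index $k$. Differentiating the displayed identity in $\vel$ and in $\gr$ (with the covector-slot correspondence $\hat{\zeta}^k = -\check{\zeta}^k$, $\hat{\xi}^k = -\check{\xi}^k$) furnishes $\deriv{\vel}{\hamd^{-1}}{\opt{\hat{\gamma}}_k} = -\,\deriv{\vel}{\hamu^{-1}}{\opt{\check{\gamma}}_k}$ together with the analogous equality for $\tang{\opt{\hat{\gamma}}_k}{\gr}{\hamd^{-1}}$. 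Differentiation in the $\zeta$-slot instead produces the same quantity $\exp^{-1}(\kin(\opt{\gr}_k, \opt{\vel}_k))$ in both Hamiltonians, because the sign flip of the covector cancels the sign flip of the linear pairing; consequently the group element inside $\Ad^*$ in \eqref{eq:covecprop-g-con} and \eqref{eq:covecprop-g-dist} coincides. Substituting these observations into the adjoint equations and invoking the induction hypothesis then propagates the sign flip from $k$ to $k-1$, and Remark \ref{rem:bij-lin} closes the loop for the $\zeta$-covectors.

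Part (ii) is then immediate: with the identifications above, one has $\hamu^{-1}(k, \zeta, \xi, \gr, \vel, \con) = \ham(k, \zeta, \xi, \gr, \vel, \con, \opt{\dist}_k)$ as functions of $(\zeta,\xi,\gr,\vel,\con)$, so the derivatives in $\vel$ and $\zeta$ and the tangent map in $\gr$ all agree with those of $\ham$ evaluated at $\opt{\gamma}_k$, and conditions \ref{list:m-ii}--\ref{list:m-iii} translate verbatim into \ref{list:CH-ii}--\ref{list:CH-iii}. The main conceptual obstacle in the induction is checking that the $\Ad^*$ coefficient appearing in both adjoint propagation equations agrees \emph{without} a sign flip, even though every other derivative of the two Hamiltonians differs by a sign; this is not apparent from the surface form of the equations but is forced by the elementary identity $\deri_\zeta \inprod{\zeta}{y} = y$, which is insensitive to the sign convention placed on $\zeta$.
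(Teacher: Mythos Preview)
Your argument is correct and follows essentially the same route as the paper's own proof: both proceed by backward induction from the transversality conditions, use Remark~\ref{rem:bij-lin} to pass between $\rho$ and $\zeta$, and hinge on the observation that the $\zeta$-derivative of the Hamiltonian is sign-insensitive so that the $\Ad^*$ coefficients match. The only difference is presentational: you encapsulate the sign relations in the single identity $\hamd^{-1}(k,-z,-w,\gr,\vel,\opt{\dist}_k)=-\hamu^{-1}(k,z,w,\gr,\vel,\opt{\con}_k)$ and then differentiate, whereas the paper writes out the three derivative identities (in $\vel$, in $\gr$, in $\zeta$) explicitly at each inductive step and verifies (i) and (ii) simultaneously rather than sequentially.
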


\begin{proof}
Let us begin by proving the result for $k = N-1$. Observe from   \eqref{eq:trans-g-con},\eqref{eq:trans-g-dist} and \eqref{eq:trans-w-con},\eqref{eq:trans-w-dist} that
\begin{gather*}
\check{\rho}^{N-1} = -\hat{\rho}^{N-1} = -\tangostar{\id}{\Phi_{\opt{\gr}_N}}(\tang{(\opt{\gr}_N,\opt{\vel}_N )}{\gr}{c_N}), \\
\check{\xi}^{N-1} = -\hat{\xi}^{N-1} = -\deriv{\vel}{c_N}{\opt{\gr}_N,\opt{\vel}_N},
\end{gather*}
and from Remark \ref{rem:bij-lin} ,$\zeta^{N-1} = -\hat{\zeta}^{N-1}$. Define $\xi^{N-1} \defn \check{\xi}^{N-1}$ and $\zeta^{N-1} \defn \check{\zeta}^{N-1}$ to finish the proof for $k = N-1$. 

For $k = N-2, N-3, \ldots, 0$ we use mathematical induction.

{\it Base case}: From the result for $k=N-1$ we obtain 
\begin{gather*}
\deriv{\vel}{\hamu^{\check{\nu}}}{\opt{\check{\gamma}}_{N-1}} =
 - \deriv{\vel}{\hamd^{\hat{\nu}}}{\opt{\hat{\gamma}}_{N-1}} =
\deriv{\vel}{\ham}{\opt{\gamma}_{N-1}}, \\
\deriv{\check{\zeta}}{\hamu^{\check{\nu}}}{\opt{\check{\gamma}}_{N-1}} 
=\deriv{\hat{\zeta}}{\hamd^{\hat{\nu}}}{\opt{\hat{\gamma}}_{N-1}} 
= \deriv{\zeta}{\ham}{\opt{{\gamma}}_{N-1}}, \\
 \tang{\opt{\check{\gamma}}_{N-1}}{\gr} {\hamu^{\check{\nu}}}
=-\tang{\opt{\hat{\gamma}}_{N-1}}{\gr}{\hamd^{\hat{\nu}}}
= \tang{\opt{\gamma}_{N-1}}{\gr} {\ham}. 
\end{gather*} 

Thus from  \eqref{eq:covecprop-g-con},\eqref{eq:covecprop-g-dist}, and \eqref{eq:covecprop-w-con},\eqref{eq:covecprop-w-dist}, observe that
\begin{align*} 
\check{\rho}^{N-2} &= -\hat{\rho}^{N-2} \\ &= \Ad_{\exp(-\deriv{\zeta}{ \ham^{\nu}}{\opt{\gamma}_{N-1}})}^* \check{\rho}^{N-1} \\ & \qquad + \tango{\id}{\Phi_{\opt{\gr}_{N-1}}}(\tang{(\opt{\gamma}_{N-1} )}{\gr}{\ham}), \\
\check{\xi}^{N-2} &= -\hat{\xi}^{N-2} = \deriv{\vel}{\ham}{\opt{\gamma}_{N-2}},
\end{align*}
and from Remark \ref{rem:bij-lin}, $\zeta^{N-2} = -\hat{\zeta}^{N-2}$. Define $\xi^{N-2} \defn \check{\xi}^{N-2}$ and $\zeta^{N-2} \defn \check{\zeta}^{N-2}$ to finish the proof for $k = N-2$.

{\it Induction hypothesis}: Assume that the condition \ref{list:CH-ii} holds for $k = N-2,N-3,\ldots,i$ for any $i \in \{ N-2, N-3, \ldots, 1\}$. We will show it also holds for $k = i-1$.

{\it Induction step}: Using an argument parallel to the one used for the base case, the induction step can be shown. 
\end{proof}

\begin{remark}
An alternate proof can be given by first making the observation that the covectors depend linearly on $\check{\nu}$ (or $\hat{\nu}$, as the case may be), and then absorbing the negative sign accompanying $c_k$ and $c_N$ in the maximisation case into $\hat{\nu}$.
\end{remark}

To obtain the ``saddle point'' condition \ref{list:CH-iv}, notice that the preceding induction argument also shows that
$\deriv{\con}{\hamu^{\check{\nu}}}{\opt{\check{\gamma}}_k} = \deriv{\con}{\ham}{\opt{\gamma}_k}$ and $\deriv{\dist}{\hamd^{\hat{\nu}}}{\opt{\hat{\gamma}}_k} = -\deriv{\dist}{\ham}{\opt{\gamma}_k}$ for $k \in \upto{N-1}$. Hence for $k \in \upto{N-1}$ we have,
\begin{align*}
\inprod{\deriv{\con}{\hamu^{\check{\nu}}}{\opt{\check{\gamma}}_k}}{\tilde{u}_k} & \le 0 \forall \opt{u}_k + \tilde{u}_k \in \Uc_k \\
\implies \inprod{\deriv{\con}{\ham}{\opt{\gamma}_k}}{\tilde{u}_k} & \le 0 \forall \opt{u}_k + \tilde{u}_k \in \Uc_k
\end{align*}
and
\begin{align*}
\inprod{\deriv{\dist}{\hamd^{\hat{\nu}}}{\opt{\gamma}_k}}{\tilde{d}_k} & \le 0 \forall \opt{\dist}_k + \tilde{\dist}_k \in \D_k \\
\implies \inprod{\deriv{\dist}{\ham}{\opt{\gamma}_k}}{\tilde{d}_k} & \ge 0 \forall \opt{\dist}_k + \tilde{\dist}_k \in \D_k
\end{align*}
Condition \ref{list:CH-i} is obtained by direct calculation.
\section{Example}
\label{sec:example}

To demonstrate the application of Theorem \ref{th:CH}, we use a simple example of single axis rotation of a spacecraft under bounded disturbance. 
It is essentially a continuation of the example presented in \cite{karmvir-2018}, to keep the spirit of extension of that work intact. 
Single axis rotation of a spacecraft evolves on $\SOtwo$. 
This is one of the easiest and most intuitive examples of a Lie group, which still preserves the structure intrinsic to the problem. 
Consider the Lie group $\SOtwo$ and its Lie algebra $\sotwo$,  
\begin{gather*}
\SOtwo \defn \set{\gr \in \R^{2 \times 2}}{\gr^{\top}\gr = \gr\gr^{\top} = \id, \det(\gr) = 1} \\
\sotwo \defn \set{v \in \R^{2 \times 2}}{v^{\top} + v = 0}
\end{gather*}

Define an isomorphism $\sigma$, its inverse $\vex$, and the $\exp$ map 
\begin{gather*}
\R \ni x \mapsto \sigma(x) \coloneqq \begin{pmatrix} 0 & -x \\ x & 0 \end{pmatrix} \in \sotwo, \: \vex(\begin{pmatrix} 0 & -x \\ x & 0 \end{pmatrix}) = x \\
\sotwo \ni \sigma(x) \mapsto \exp(\sigma(x)) \coloneqq \begin{pmatrix} \cos(x) & -\sin(x) \\ \sin(x) & \cos(x) \end{pmatrix}
\end{gather*}
By virtue of the Riesz representation theorem, we identify $(\sotwo)^*$ with $\sotwo$ and $\sigma^*$ with $\vex$.
For this Section, $\inprod{\eta}{v} = \frac{1}{2}\tr(\eta^{\top} v) \forall \eta \in (\sotwo)^*, v \in \sotwo$.
\begin{remark}
Any element $\gr \in \SOtwo$ can be represented as $\gr = \exp(\theta)$ for some $\theta \in [0,2\pi)$. 
\label{rem:SOtwo}
\end{remark}

We model single axis rotation as
\begin{subequations}
\begin{align}
\gr_{k+1} &= \gr_{k}\kinex(\gr_k,\vel_k) & \forall k \in \upto{N-1}, \label{eq:ex-sys-kin} \\
\vel_{k+1} &= \dynex(\gr_k,\vel_k,\con_k,\dist_k) & \forall k \in \upto{N-1} \label{eq:ex-sys-dyn}
\end{align}
\label{eq:ex-sys}
\end{subequations}
where
\en{
\item $\gr_k \in \SOtwo$, $\vel_k \in \R$ $\forall k \in \upto{N}$, 
\item $\Uc \defn [-\con_c,\con_c]$, $\D \defn [-\dist_c,\dist_c]$ and $\con_c,\dist_c,\step \in \R_{>0}$ are fixed, and initial conditions $\gr_0 = \bar{\gr}_0$, $\vel_0 = \bar{\vel}_0$ are known,
\item 
\begin{align*}
&\SOtwo \times \R \ni (\gr,\vel) \mapsto \kinex(\gr,\vel) \defn \\ & \qquad \qquad \begin{pmatrix} \sqrt{1-\step^2 \vel^2} & - \step \vel \\  \step \vel & \sqrt{1-\step^2 \vel^2} \end{pmatrix} \in \SOtwo,
\end{align*}
\item $\SOtwo \times \R \times \R \times \R \ni (\gr,\vel,\con,\dist) \mapsto \dynex(\gr,\vel,\con,\dist) \defn \vel + \step (\con + \dist). $
}

The optimal control problem is
\begin{subequations}
\label{eq:opt-prob-ex} 
\begin{alignat}{2}
 \min\limits_{\ch{\con}} \max\limits_{\ch{\dist}} & \quad  &&  \frac{1}{2}\bigg(\Lambda^2\vel_N^2 + \sum\limits_{k=0}^{N-1}\lambda^2\con_k^2 + \Lambda^2\vel_k^2 - \mu^2\dist_k^2 \notag \\
 &&& \qquad \qquad \qquad +  \psi^2(2 - \tr(\gr_k)) \bigg)    \label{eq:opt-prob-ex-cost} \\
\text{s.t. } & && \text{system } \eqref{eq:ex-sys}, \\
& & & \con _k \in \Uc _k, \dist_ k \in \D _k \: \: \forall k \in \upto{N-1}, \\
& & &\gr_0 = \bar{\gr}_0,\vel_0 = \bar{\vel}_0. 
\end{alignat}
\end{subequations}
\begin{remark}
The modelling interpretation of cost function is as follows.
$\lambda^2\con_k^2$ minimises control effort and $\Lambda^2\vel_k^2$ maintain low angular velocity.
$- \mu^2\dist_k^2$ penalises the effort exerted by the external disturbance, the justification for which is that Nature also tries to exert minimum effort.
The last term in the cost function, $(2 - \tr(\gr_k))$ penalises the deviation of $\gr_k$ from $\id$. To see this, utilise Remark \ref{rem:SOtwo} and let $\gr_k = \exp(\theta_k)$ for some $\theta_k \in [0,2\pi)$. Therefore,
\begin{align*}
2 - \tr(\gr_k) = 2(1 - \cos(\theta_k)) = 4\sin^2(\frac{\theta_k}{2}).
\end{align*} 
The quantity attains its minimum at $\theta_k = 0$ which is $\gr_k = \id$ which can be visualised in Fig. \ref{fig:group-cost}.
\end{remark}

\begin{figure}
\centering
\includegraphics[scale=0.5]{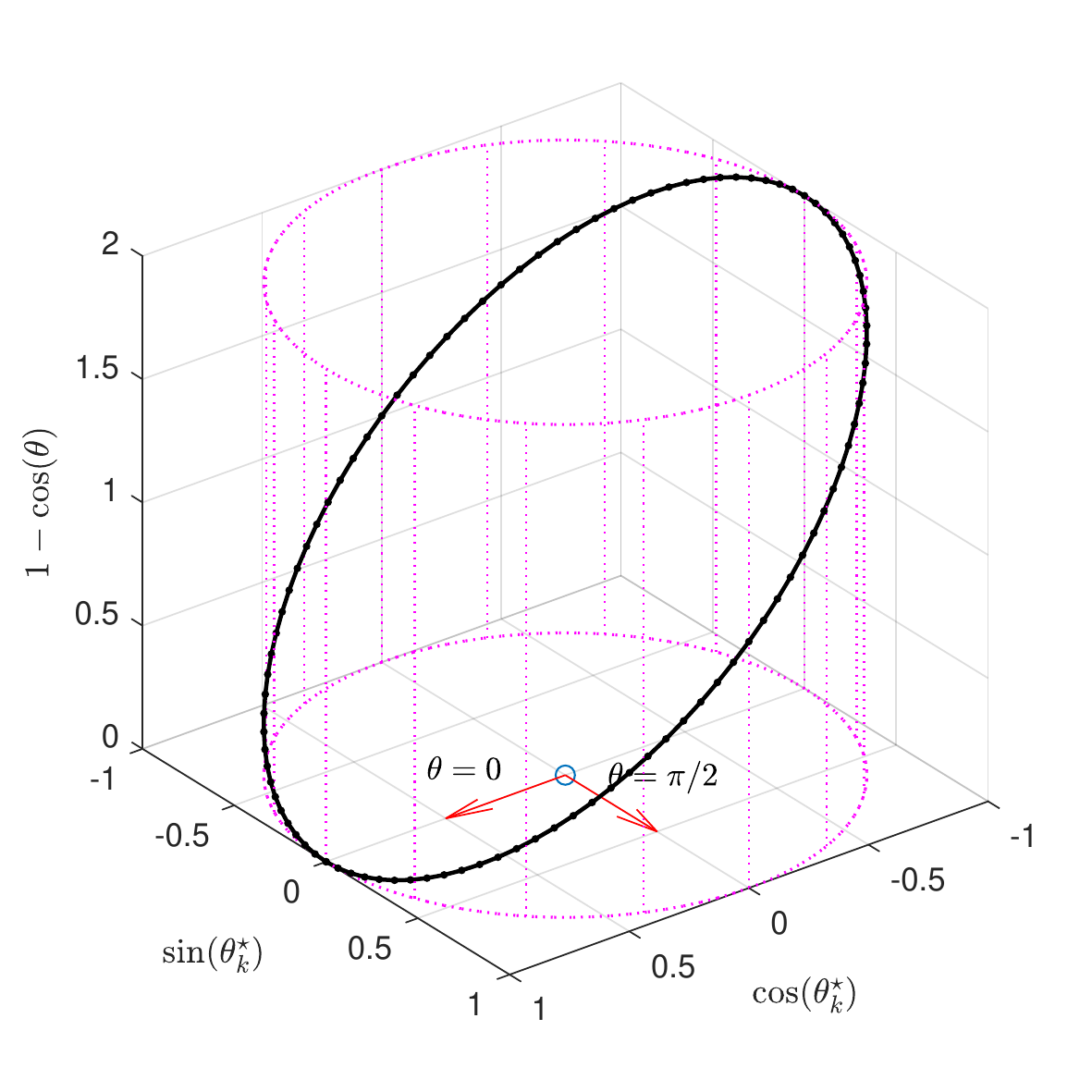}
\caption{Variation of $(2 - \tr(\gr_k))$ in the cost function}
\label{fig:group-cost}
\end{figure}
Assume that $\mu,\lambda,\Lambda,\psi \in \R_{>0}$ are carefully chosen so as to ensure that the cost function in \eqref{eq:opt-prob-ex-cost} admits a saddle point\footnote{In the simulations shown later, we verify numerically that the optimiser obtained is indeed a saddle point.}.
For the optimisation problem in \eqref{eq:opt-prob-ex} let the optimal control and disturbance sequence be $\opt{\ch{\con}}$ and $\opt{\ch{\dist}}$ respectively, corresponding to which the state trajectory is $\opt{\ch{\gr}}$ and $\opt{\ch{\vel}}$.  Applying Theorem \ref{th:CH}, we define the Hamiltonian (which is independent of the time instant since the cost, kinematics and dynamics are independent of time instant) as:
\begin{align*}
&(\sotwo)^* \times \R^* \times \SOtwo \times \R \times \R \times \R \ni (\zeta_{\sigma},\xi,\gr,\vel,\con,\dist)  \mapsto \notag \\ 
&\ham(\zeta_{\sigma},\xi,\gr,\vel,\con,\dist)  \defn -\left(\frac{\lambda^2\con^2 + \Lambda^2\vel^2  - \mu^2\dist^2 - \psi^2 \tr(\gr)}{2} \right) \\ & + \inprod{\zeta_{\sigma}}{\exp^{-1}(\kinex(\gr,\vel))}  + \xi \dynex(\gr,\vel,\con,\dist) \notag \\
 &= -\left(\frac{\lambda^2\con^2 + \Lambda^2\vel^2 - \mu^2\dist^2 -\psi^2 \tr(\gr)}{2} \right) \\ 
 &+ \zeta\sin^{-1}(\step\vel)+ \xi(\vel + \step (\con + \dist))
\end{align*}
where $\zeta \defn \sigma^{*}(\zeta_{\sigma}) = \vex(\zeta_{\sigma})$. The optimal state dynamics and adjoint dynamics are obtained as(refer Appenndix \ref{app:ex-two-adj} for details) 
\begin{align}
\opt{\gr}_{k+1} &= \opt{\gr}_{k}\kinex(\opt{\gr}_k,\opt{\vel}_k), \label{eq:ocp2-group-dyn} \\
\opt{\vel}_{k+1} &= \dynex(\opt{\gr}_k,\opt{\vel}_k,\opt{\con}_k,\opt{\dist}_k), \label{eq:ocp2-vel-dyn} \\
\zeta^{k-1}_{\sigma} & = \Ad_{\exp(-\deriv{\zeta_{\sigma}}{\ham}{\opt{\gamma}_k})}^* \zeta^k_{\sigma} +  \tangostar{\id}{\Phi_{\opt{\gr}_k}}(\tang{\opt{\gamma}_k}{\gr}{\ham} ), \notag\\
&= 
\kinex(\opt{\gr}_k,\opt{\vel}_k)\zeta^k_{\sigma}\left(\kinex(\opt{\gr}_k,\opt{\vel}_k)\right)^{\top} + \tangostar{\id}{\Phi_{\opt{\gr}_k}}(\tang{\opt{\gamma}_k}{\gr}{\ham} ), \notag \\ 
&= \zeta^k_{\sigma} + \psi^2 \frac{(\opt{\gr}_k)^{\top} - \opt{\gr}_k}{2}, \label{eq:ocp2-zeta-dyn}\\
 \xi^{k-1} &= \deriv{\vel}{ \ham}{\opt{\gamma}_k} = \frac{\step\zeta^k }{\sqrt{1-(\step \opt{\vel}_k)^2}} + \xi^{k} - \Lambda^2\opt{\vel}_k, \notag \\
\zeta^{N-1}_{\sigma}& = -\tangostar{\id}{\Phi_{\opt{\gr}_N}}(\tang{(\opt{\gr}_N,\opt{\vel}_N )}{\gr}{c_N}) = \psi^2 \frac{(\opt{\gr}_N)^{\top} - \opt{\gr}_N}{2}, \label{eq:ocp2-zeta-trans} \\
\xi^{N-1} &= -\deriv{\vel}{c_N}{\opt{\gr}_N,\opt{\vel}_N} = -\Lambda^2\opt{\vel}_N. \notag
\end{align}
The adjoint dynamics simplify to the following for $k \in \upto{N-1}$
\begin{align}
\zeta_{\sigma}^k &= \sum_{i=k+1}^{N} \psi^2\frac{(\opt{\gr}_i)^{\top} - \opt{\gr}_i}{2}, \label{eq:ocp2-zeta} \\
\xi^k &= \step\sum_{i=k+1}^{N-1} \frac{1}{\sqrt{1-(\step \opt{\vel}_k)^2}}\sum_{j=i+1}^{N}\psi^2\vex\left(\frac{(\opt{\gr}_j)^{\top} - \opt{\gr}_j}{2} \right) \notag \\ 
& \qquad \qquad -\sum_{i=k+1}^{N}\Lambda^2\opt{\vel}_i. \label{eq:ocp2-xi}
\end{align}
Since the Hamiltonian is convex in $\con$ and concave in $\dist$, the optimal control and disturbance are easily derived as
\begin{align}
\opt{\con}_{k}& = \argmax\limits_{w \in [-\con_c,\con_c]} \ham ({\zeta}^k_{\sigma},\xi^k,\opt{\gr}_{k}, \opt{\vel}_k, w,\opt{\dist}_k ), \notag \\
&= \begin{cases}
\con_c \quad &\text{if} \; \dfrac{\step \xi^k}{\lambda^2} \geq \con_c \\
-\con_c \quad &\text{if} \; \dfrac{\step \xi^k}{\lambda^2} \leq -\con_c \\
\dfrac{\step \xi^k}{\lambda^2} \quad & \text{otherwise}
\end{cases}
\label{eq:ocp2-control}
\end{align}
\begin{align}
\opt{\dist}_{k}& = \argmin\limits_{w \in [-\dist_c,\dist_c]} \ham ({\zeta}^k_{\sigma},\xi^k,\opt{\gr}_{k}, \opt{\vel}_k, \opt{\con}_k, w ), \notag \\
& = \begin{cases}
\dist_c \quad &\text{if} \; \dfrac{\step \xi^k}{\mu^2} \leq -\dist_c \\
-\dist_c \quad &\text{if} \; \dfrac{\step \xi^k}{\mu^2} \geq \dist_c \\
-\dfrac{\step \xi^k}{\mu^2} \quad & \text{otherwise}
\end{cases}
\label{eq:ocp2-disturbance}
\end{align}

\begin{remark}
If in \eqref{eq:opt-prob-ex}, we let $\psi = 0$, the problem reduces to a linear quadratic dynamic game (which is a robust linear system with quadratic cost). This is because $(\gr_k)_{k=1}^{N}$ doesn't appear in  \eqref{eq:opt-prob-ex-cost}, hence the constraint imposed by \eqref{eq:ex-sys-kin} can be trivially satisfied, and \eqref{eq:ex-sys-dyn} and  \eqref{eq:opt-prob-ex-cost} define a linear system with quadratic cost. Additionally, if we let the control and disturbance be unconstrained, the solution of the resulting problem is well known \cite[Chapter 3.2.1]{basar-1995}, and it satisfies the necessary conditions given by Theorem \ref{th:CH}. See Appendix \ref{app:lq-dyn-game} for details.
\end{remark}

If we substitute \eqref{eq:ocp2-zeta} and \eqref{eq:ocp2-xi}, into \eqref{eq:ocp2-control} and \eqref{eq:ocp2-disturbance}, and substitute the resulting equations into \eqref{eq:ocp2-group-dyn} and \eqref{eq:ocp2-vel-dyn}, we effectively change the necessary conditions a set of non-linear equations in $\left\{(\opt{\gr}_k,\opt{\vel}_k)\right\}_{k=0}^{N}$. 
We solve and simulate these equations numerically using {\tt fsolve} in MATLAB R2019a and present a few results. We verify numerically whether the solution obtained is indeed a saddle point using Proposition \ref{prop:suff-saddle-point}.
We will let $N=50, \step = 0.1, \Lambda = 0.1, \lambda = 1, \mu = 2$ for all simulations and  $\psi,\bar{\gr}_0, \bar{\vel}_0$ will be changed. 
These {\it initial conditions} define the optimal control problem. 
The disturbance and control are assumed to be unconstrained so as to keep the problem smooth, which makes the use of {\tt fsolve} easier.
We will use Remark \ref{rem:SOtwo} when plotting the optimal state trajectory. 
In the plots that we present, we employ a {\it counterclockwise} convention for $\theta$, which means, on going counterclockwise from $\theta = 0$, and transversing an angle $\pi/2$ we arrive at $\theta = \pi/2$, and on going clockwise and traversing an angle $\pi/2$ we arrive at $\theta = -\pi/2 \equiv \theta = 3\pi/2$. A positive value of $\vel$ therefore represents a velocity in the counterclockwise direction, and a negative value represents a velocity in the clockwise direction.

We present simulations of five trajectories the details for which are in Table \ref{table:simdata}. For an extensive simulation study refer \cite{anant-thesis}.
For \ref{list:S7minus} we show the optimal configuration, angular velocity and input sequences in Fig. \ref{fig:S7minus}. The initial condition starts at the opposite end at $\bar{\theta}_0 = \pi$ but manages to reach $\theta = 0$. This is an advantage of using a geometric controller which might or might not be guaranteed by conventional linear or nonlinear controllers.
Comparing between Fig. \ref{fig:S3-group} and \ref{fig:S16-group} we see that changing the direction of the angular velocity from counterclockwise in \ref{list:S3} to clockwise in \ref{list:S16} assists the controller in causing a drift towards $\theta = 0$. This is expected intuitively since the intended drift from $\bar{\theta}_0 = \pi/2$ to $\theta = 0$ is clockwise.
The numerical algorithm utilised by the solver requires an initial guess to begin the solution procedure. Between \ref{list:S17} and \ref{list:UW4}, we change the solver initial guess keeping everything else the same. The solver converges to a different saddle point in both cases. From Fig. \ref{fig:UW4-group} we observe that \ref{list:UW4} displays behaviour similar to quaternion unwinding (which is a phenomenon in which the system first diverges from the desired equilibrium then converges to it \cite{bhat-2000}) by first diverging away from $\theta = 0$ and then converging towards it. 

{\renewcommand\arraystretch{1.2}
\begin{table}[h!]
\centering
\begin{tabular}{ L P  K  K  K  M } 
 \toprule
 Trajectory  & $\psi$ & $\bar{\vel}_0$ & $\bar{\theta}_0$ & Figure & \\
   & & (rad/s) & (rad) &  &\\ \midrule
   \siml{list:S7minus} & 0.3 & 0.3 & 0.3 & Fig \ref{fig:S7minus} & \\ \midrule
   
   \siml{list:S3} & 0.2 & 0.1 & $\pi/2$ & Fig \ref{fig:S3-group} & \\
   
   \siml{list:S16} & 0.2 & -0.1 & $\pi/2$ & Fig \ref{fig:S16-group} & \\ \midrule
   
   \siml{list:S17} & 0.3 & -0.1 & $4\pi/3$ & Fig \ref{fig:S17-group} & \\
   
   \siml{list:UW4} & 0.3 & -0.1 & $4\pi/3$ & Fig \ref{fig:UW4-group} & \\
  \bottomrule
\end{tabular}
\caption{Simulation Data}
\label{table:simdata}
\end{table}
}


\begin{figure}[ht]
\begin{subfigure}{\displaytextwidth}
\centering
\includegraphics[scale = 0.5]{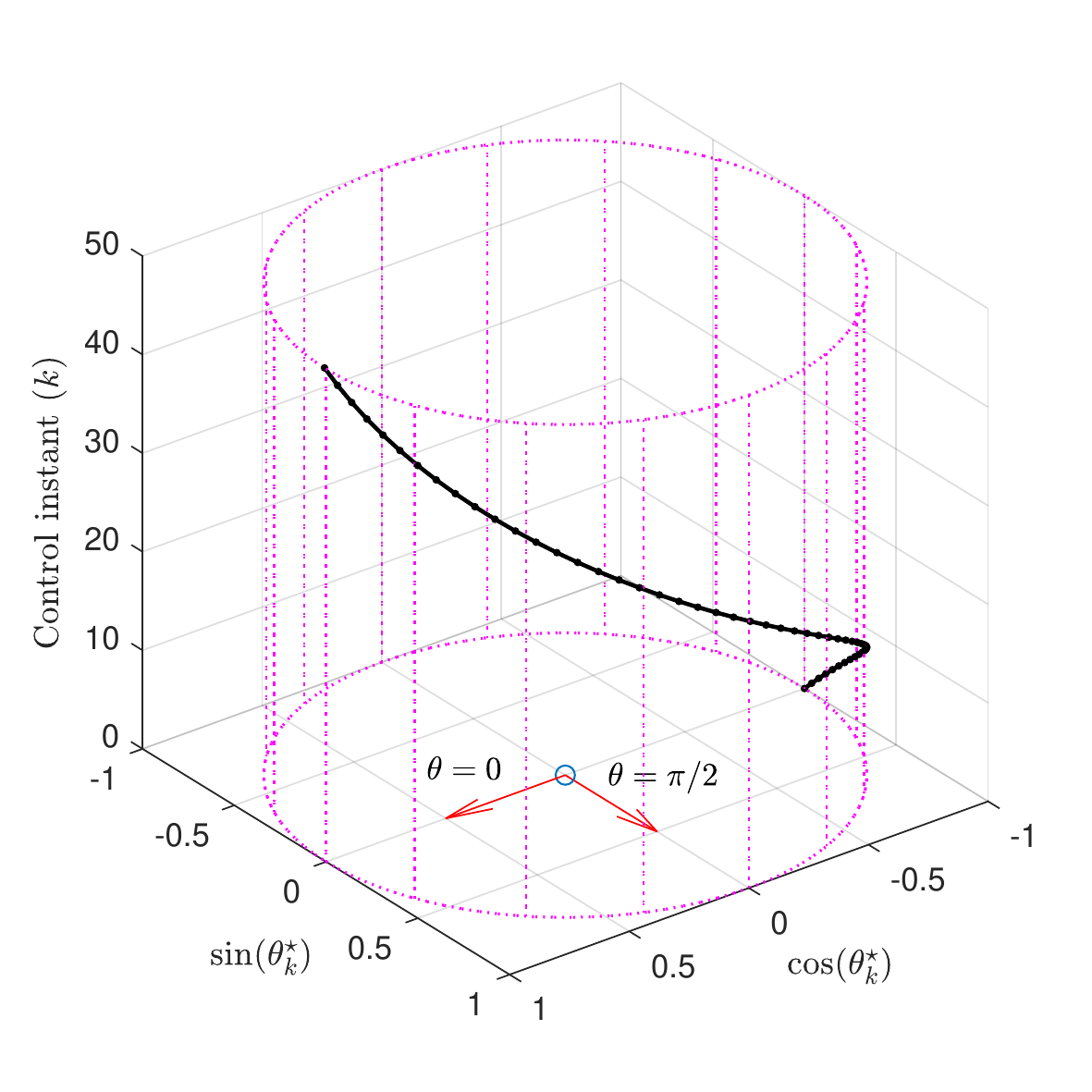}
\caption{Optimal configuration trajectory}
\end{subfigure}
\begin{subfigure}{\displaytextwidth}
\centering
\includegraphics[scale = 0.5]{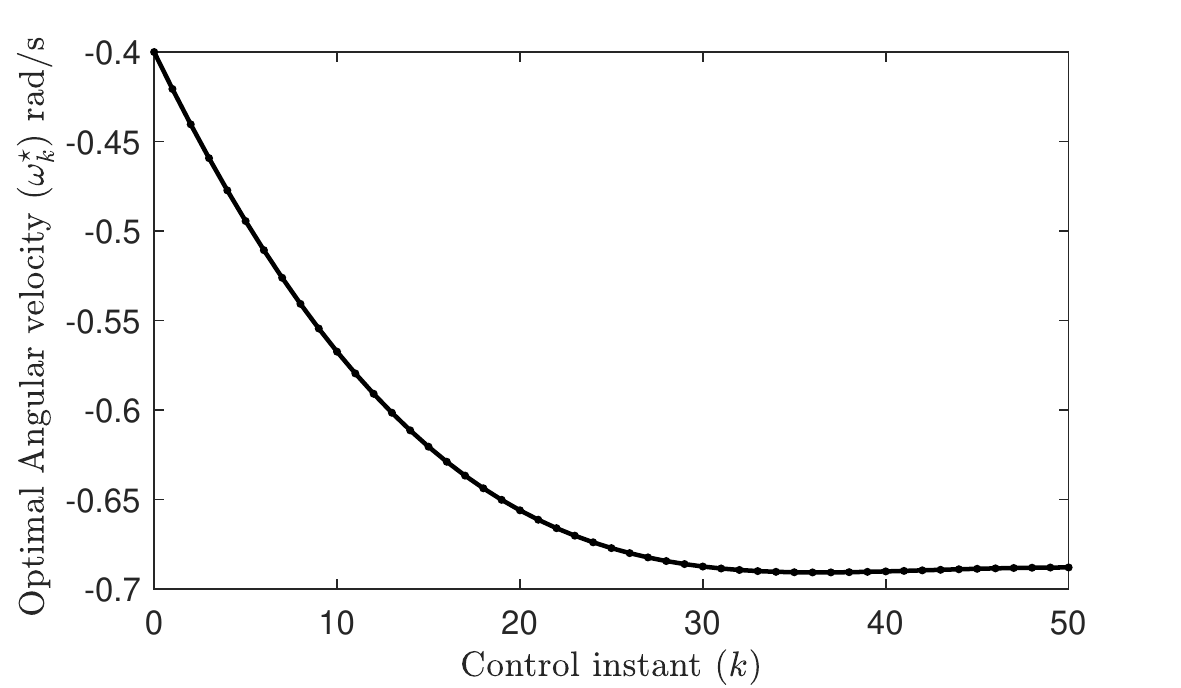}
\caption{Optimal angular velocity}
\end{subfigure}
\begin{subfigure}{\displaytextwidth}
\centering
\includegraphics[scale=0.5]{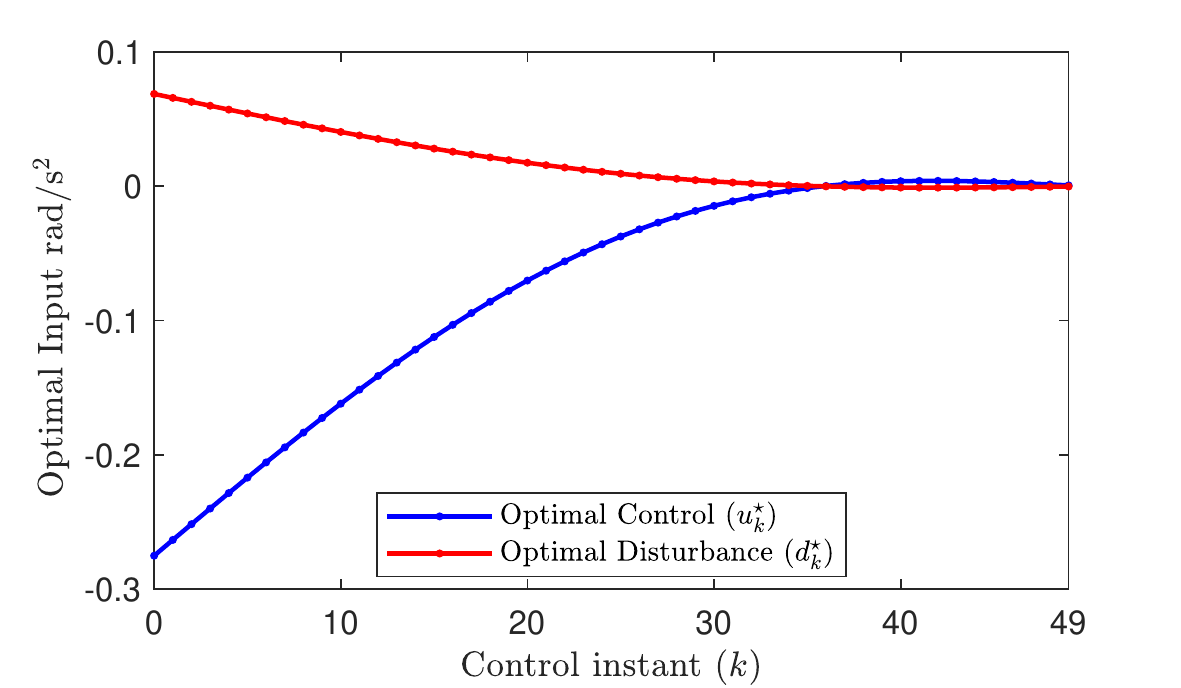}
\caption{Optimal input sequence}
\end{subfigure}
\caption{Trajectory \ref{list:S7minus}}
\label{fig:S7minus}
\end{figure}

\begin{figure}[ht]
\begin{subfigure}{\displaytextwidth}
\centering
\includegraphics[scale = 0.5]{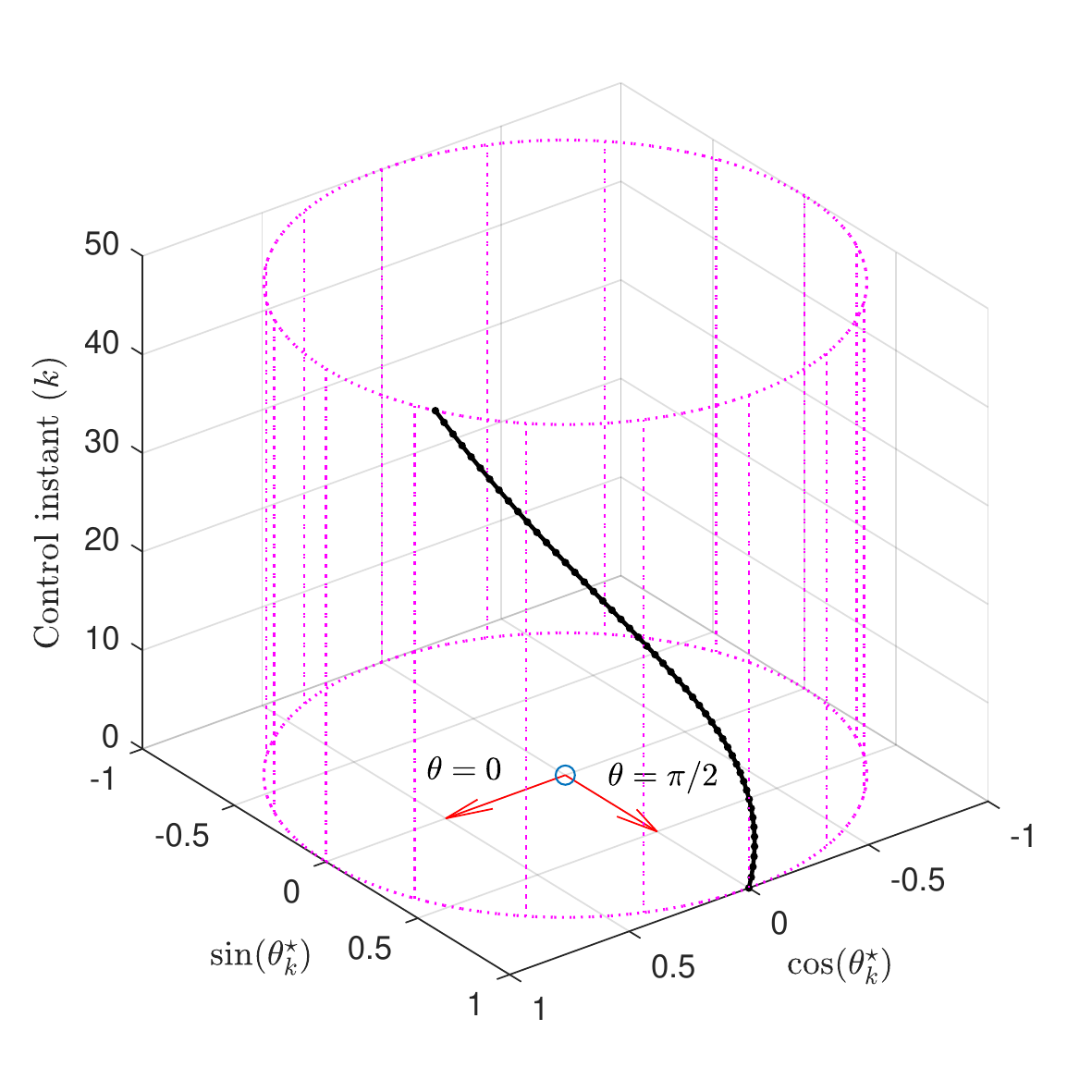}
\caption{Trajectory \ref{list:S3}}
\label{fig:S3-group}
\end{subfigure}
\begin{subfigure}{\displaytextwidth}
\centering
\includegraphics[scale = 0.5]{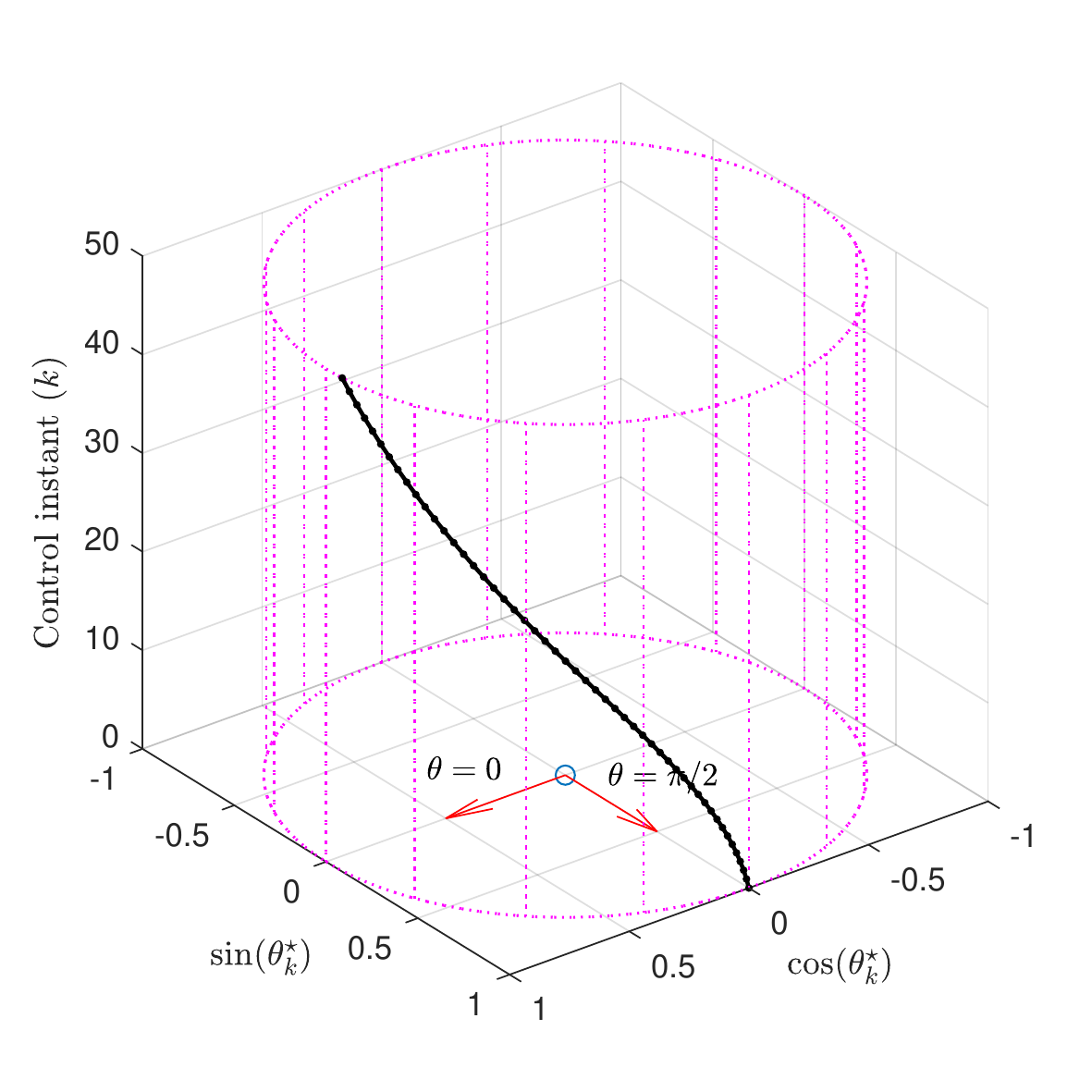}
\caption{Trajectory \ref{list:S16}}
\label{fig:S16-group}
\end{subfigure}
\caption{Optimal configuration trajectory}
\end{figure}

\begin{figure}[ht]
\begin{subfigure}{\displaytextwidth}
\centering
\includegraphics[scale = 0.5]{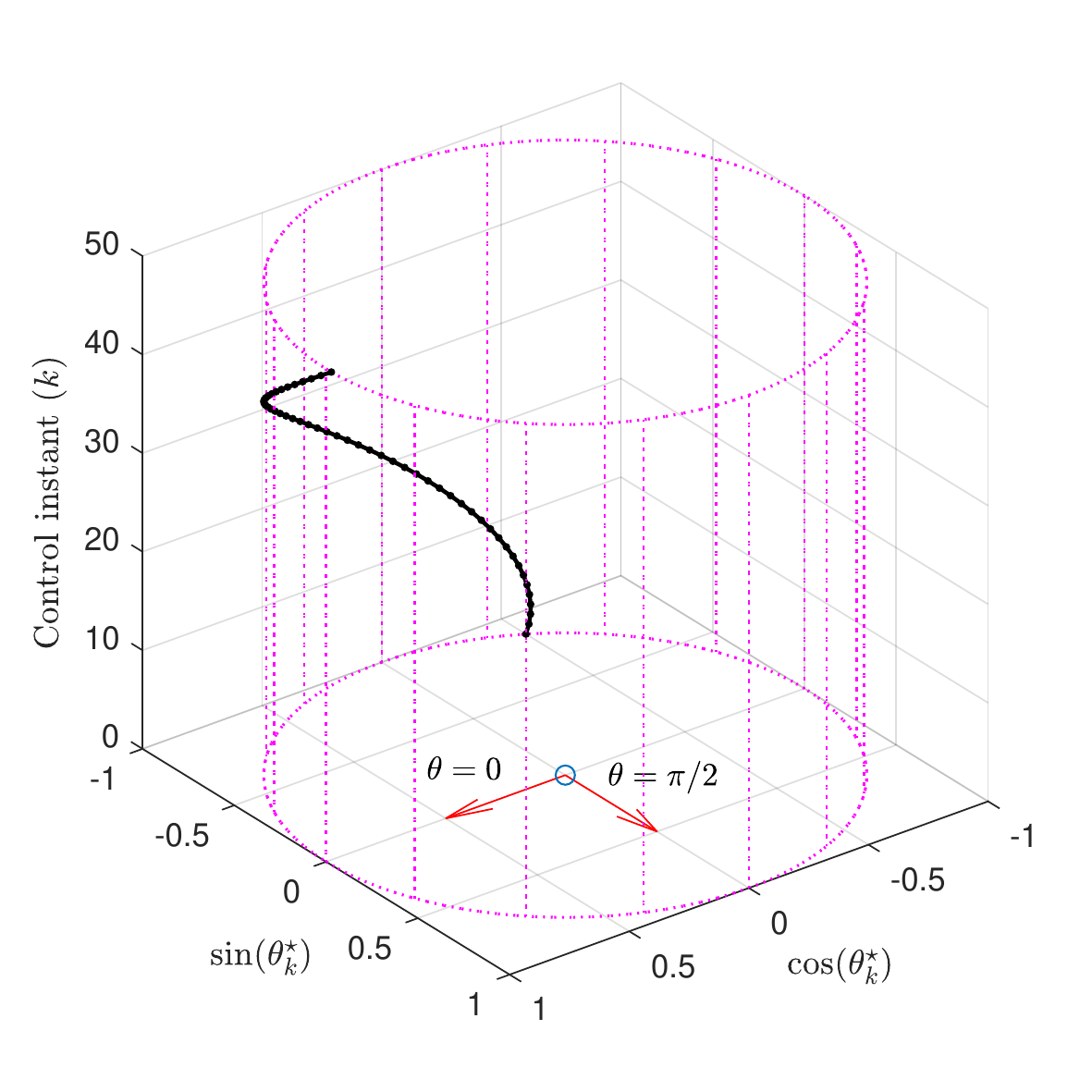}
\caption{Trajectory \ref{list:S17}}
\label{fig:S17-group}
\end{subfigure}
\begin{subfigure}{\displaytextwidth}
\centering
\includegraphics[scale = 0.5]{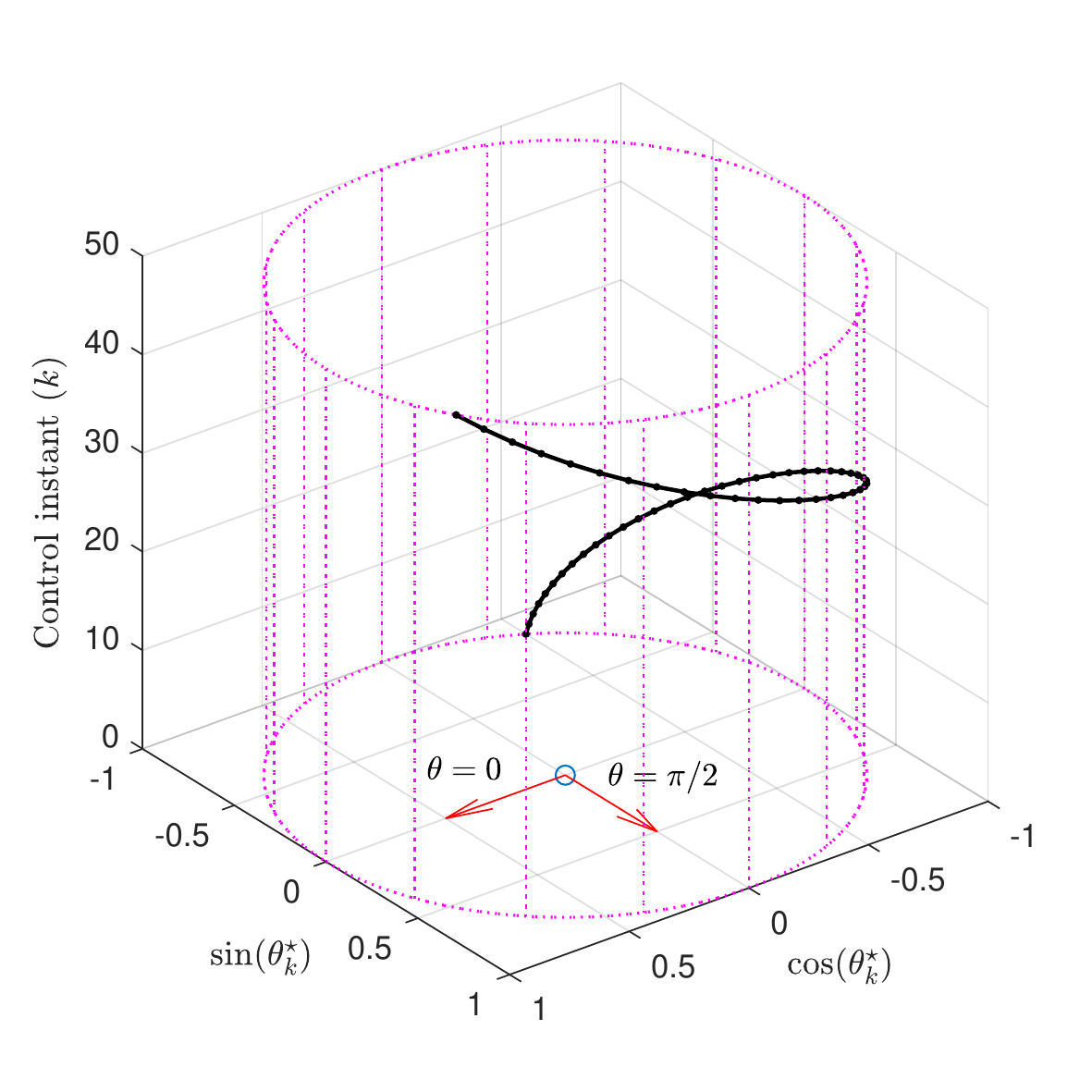}
\caption{Trajectory \ref{list:UW4}}
\label{fig:UW4-group}
\end{subfigure}
\caption{Optimal configuration trajectory}
\end{figure}


\appendix
\section{Proof of Proposition \ref{prop:saddle-point}}
\label{app:proof-saddle-prop}
We will first prove that Condition \ref{list:saddle-i} is equivalent to Condition \ref{list:saddle-ii}. 
In Condition \ref{list:saddle-ii}, the intersection is always non-empty since $(\opt{\con},\opt{\dist}) \in \Uc \times \D$. Assume that the intersection has one more point $(u_1,d_1) \ne (\opt{\con},\opt{\dist})$. Then necessarily $(\con_1,\dist_1) \in \Omega_1 \cup \Omega_2$. If $(\con_1,\dist_1) \in \Omega_1$ then $\dist_1 = \opt{\dist}$ and $\map (u_1,d_1) < \map(\opt{\con},\opt{\dist})$. Thus $(\opt{u},\opt{d})$ will not be a saddle point of $\map$ restricted to $\Uc \times \D$.  A similar conclusion can be drawn for the case when $(\con_1,\dist_1) \in\Omega_2$. Hence Condition \ref{list:saddle-i} $\implies$ \ref{list:saddle-ii}.

Assume that $(\opt{u},\opt{d})$ is not a saddle point. Then there exists $(\opt{u},\opt{d}) \ne (\con_1,\dist_1) \in \Uc \times \D$ such that either $\dist_1 = \opt{\dist}$ and $\map(\con_1,\dist_1) < \map(\opt{u},\opt{d})$ or $\con_1 = \opt{\con}$ and $\map(\opt{u},\opt{d}) < \map(\con_1,\dist_1)$. Then $(\con_1,\dist_1) \in \Omega_1 \cup \Omega_2$. Then $\big(\Omega_1 \cup \Omega_2 \cup \{ (\opt{\con},\opt{\dist}) \} \big) \cap (\Uc \times \D ) = \{ (\opt{\con},\opt{\dist}), (\con_1,\dist_1) \}$. Hence Condition \ref{list:saddle-ii} $\implies$ \ref{list:saddle-i}.

We will now prove that Condition \ref{list:saddle-ii} is equivalent to Condition \ref{list:saddle-iii}. Observe that 
\begin{align*}
\Omega_1^\prime \cup \Omega_2^\prime &= \{ (\opt{\con},\opt{\dist}) \} \cup \Omega_1 \cup \Omega_2 \cup \{ (\opt{\con},\opt{\dist}) \},  \\
&=\Omega_1 \cup \Omega_2 \cup \{ (\opt{\con},\opt{\dist}) \} 
\end{align*}
Therefore,
\begin{align*}
\big(\Omega_1 \cup \Omega_2 \cup \{ (\opt{\con},\opt{\dist}) \} \big) \cap (\Uc \times \D ) = \{ (\opt{\con},\opt{\dist}) \},& \\
\iff (\Omega_1^\prime \cup \Omega_2^\prime) \cap (\Uc \times \D ) = \{ (\opt{\con},\opt{\dist}) \},& \\
\iff \big(\Omega_1^\prime \cap (\Uc \times \D \big) \cup \big(\Omega_2^\prime \cap (\Uc \times \D \big) = \{ (\opt{\con},\opt{\dist}) \}, & \\
\iff 
\begin{cases}
\big(\Omega_1^\prime \cap (\Uc \times \D \big)  &= \{ (\opt{\con},\opt{\dist}) \} \\
 \big(\Omega_2^\prime \cap (\Uc \times \D \big)  &= \{ (\opt{\con},\opt{\dist}) \}
\end{cases} &
\end{align*}
The last double implication is justified as follows. We have two sets, both non-empty (since they have at least one common point which is $(\opt{u},\opt{d})$. Their intersection is a singleton set. Hence both of them must individually be that singleton set.

\section{}
\label{app:scaling}

In this section we show that given $r \in \R_{>0}$, if the Hamiltonian in \eqref{eq:ham-con} is defined using $r{\check{\nu}}$ as
\begin{align}
&\upto{N-1} \times \LA^* \times (\R^{\dimg})^* \times \LG \times \R^{\dimg} \times \R^{\dimu}  \ni (k,\check{\zeta},\check{\xi},\gr,\vel,\con) \notag \\  
&\mapsto \hamu^{r\check{\nu}}(k,\check{\zeta},\check{\xi},\gr,\vel,\con) \defn r\check{\nu} c_k(\gr,\vel,\con,\opt{\dist}_k) \notag \\ 
 & \qquad \quad + \inprod{\check{\zeta}}{\exp^{-1}(\kin(\gr,\vel))}  + \inprod{\check{\xi}}{\dyn(\gr,\vel,\con,\opt{\dist}_k)},
\end{align}
and the covectors are changed to $(r\check{\zeta})_{k=0}^{N-1}$ and $(r\check{\xi})_{k=0}^{N-1}$, then  conditions \ref{list:m-i}-\ref{list:m-v} are still satisfied.

To show this, first define $\opt{\check{\Gamma}}_k \defn (r\check{\zeta}_k,r\check{\xi}_k,\opt{\gr}_k,\opt{\vel}_k,\opt{\con}_k)$.
Condition \ref{list:m-i} is satisfied since 
\begin{align*}
\deriv{\check{\zeta}}{\hamu^{\check{\nu}}}{\opt{\check{\gamma}}_k} = \deriv{\check{\zeta}}{\hamu^{r\check{\nu}}}{\opt{\check{\Gamma}}_k}, \quad \deriv{\check{\xi}}{\hamu^{\check{\nu}}}{\opt{\check{\gamma}}_k} = \deriv{\check{\xi}}{\hamu^{r\check{\nu}}}{\opt{\check{\Gamma}}_k} .
\end{align*}
Conditions \ref{list:m-ii} and \ref{list:m-iv} are satisfied since $\Ad_{\gr}^*(\cdot)$ is a linear map for any $\gr \in \LG$, Remark \ref{rem:bij-lin} holds and since
\begin{gather*}
r\deriv{\con}{\hamu^{\check{\nu}}}{\opt{\check{\gamma}}_{k-1}} = \deriv{\con}{\hamu^{r\check{\nu}}}{\opt{\check{\Gamma}}_{k-1}}, \\ 
r\deriv{\vel}{\hamu^{\check{\nu}}}{\opt{\check{\gamma}}_k} = \deriv{\vel}{\hamu^{r\check{\nu}}}{\opt{\check{\Gamma}}_k}, \quad 
r\tang{\opt{\check{\gamma}}_k }{\gr}{\hamu^{\check{\nu}}} = \tang{\opt{\check{\Gamma}}_k }{\gr}{\hamu^{r\check{\nu}}}.
\end{gather*}
Condition \ref{list:m-iv} is satisfied because $\check{\zeta}^{N-1}$ and $\check{\xi}^{N-1}$ depend linearly on $\check{\nu}$.
Condition \ref{list:m-v} is trivially satisfied.
\section{}

\label{app:ex-two-adj}

We will present the derivation of \eqref{eq:ocp2-zeta-dyn}. 
As noted in Remark \ref{rem:SOtwo}, let $\opt{\gr}_k = \exp(\opt{\theta}_k)$ for some $\opt{\theta}_k \in [0,2\pi)$. For arbitrary $t \in \R$, define $v \coloneqq \sigma(t) \in \sotwo$. 
Recall that 
\begin{equation*}
\exp(v) = \begin{pmatrix}\cos(t) & -\sin(t) \\ \sin(t) & \cos(t) \end{pmatrix},
\end{equation*}
and that for any symmetric $S \in \R^{2 \times 2}$, $\tr(Sv) = \frac{1}{2}\tr((S - S^{\top})v)$. 
Then,
\begin{align*}
&\inprod{\tangostar{\id}{\Phi_{\opt{\gr}_k}}(\tang{\opt{\gamma}_k}{\gr}{\ham} )}{v} = \inprod{\tang{\opt{\gamma}_k}{\gr}{\ham} }{\tango{\id}{\Phi_{\opt{\gr}_k}} v}, \\
& = \left. \frac{d}{ds} \right\rvert_{s=0} \ham(\opt{\gr}_k \exp(vs)) = \frac{1}{2} \left. \frac{d}{ds} \right\rvert_{s=0} \psi^2\tr\left(\opt{\gr}_k \exp(vs)\right), \\
&= \left. \frac{d}{ds} \right\rvert_{s=0} \psi^2 \cos(\theta + ts) 
= -t \psi^2 \sin(\theta), \\
&= \frac{1}{2}\tr(\psi^2 \opt{\gr}_kv) = \frac{1}{2}\tr(\left(\psi^2 \frac{(\opt{\gr}_k)^{\top} - \opt{\gr}_k}{2}\right)^{\top}v), \\
&= \inprod{\left(\psi^2 \frac{(\opt{\gr}_k)^{\top} - \opt{\gr}_k}{2}\right)}{v}.
\end{align*}
\eqref{eq:ocp2-zeta-trans} is derived similarly.
\section{}
\label{app:lq-dyn-game}
If $\psi = 0$ the adjoint equations simplify to 
\begin{align*}
&\zeta^{k} = 0, \: 
&\xi^{k} = -\sum_{i=k+1}^{N} \Lambda^2\opt{\vel}_i \quad \forall k \in \upto{N-1}.
\end{align*}
The optimal inputs retain the same expressions as \eqref{eq:ocp2-control}, \eqref{eq:ocp2-disturbance}. We continue to assume that $\mu,\lambda,\Lambda > 0$ are carefully chosen so as to ensure that the cost function in \eqref{eq:opt-prob-ex-cost} admits a saddle point \cite[Lemma 3.1]{basar-1995}. As per \cite[Theorem 3.1]{basar-1995}, the solutions for the optimal inputs, for $k \in \upto{N-1}$ are (to maintain consistency between results, we will, without any loss of generality, let $\lambda = 1$)
\begin{align*}
\opt{\con}_k &= -\step M_{k+1}L_k^{-1}\vel_k, \\
\opt{\dist}_k &= \mu^{-2}\step M_{k+1}L_k^{-1}\vel_k, \\
\opt{\vel}_{k+1} &= L_k^{-1}\opt{\vel}_k, \quad \opt{\vel}_{0} = \bar{\vel}_0, \\
M_{k} &= \Lambda^2 + M_{k+1}L_k^{-1}, \quad M_{N} = \Lambda^2, \\
L_k &\coloneqq 1 + \step^2 M_{k+1}(1 - \mu^{-2}) .
\end{align*}
To show that this solution satisfies our necessary conditions \eqref{eq:ocp2-control} and \eqref{eq:ocp2-disturbance}, we will utilise mathematical induction.

{\it Base Case:} 
\begin{align*}
\opt{\con}_{N-1} &= -\step \Lambda^2 L_{N-1}^{-1}\opt{\vel}_{N-1}, \\
&= -\step \Lambda^2 \opt{\vel}_{N}, \\
&= \step \xi^{N-1}.
\end{align*}

{\it Induction Hypothesis:} Assume the claim to be true for $k = N-1,N-2,\ldots,i$.

{\it Induction Step:}
\begin{align*}
\opt{\con}_{i-1} &= -\step M_{i}L_{i-1}^{-1}\opt{\vel}_{i-1} \\
&= -\step (\Lambda^2 + M_{i+1}L_i^{-1}) L_{i-1}^{-1}\opt{\vel}_{i-1}, \\
&= -\step \Lambda^2 \opt{\vel}_{i} -\step M_{i+1}L_{i}^{-1}\opt{\vel}_{i}, \\
&= -\step \Lambda^2 \opt{\vel}_{i} + \opt{\con}_{i} = -\step \Lambda^2 \opt{\vel}_{i}  -\Lambda^2\sum_{k = i+1}^{N}\opt{\vel}_k, \\
&= \step \xi^{i-1}.
\end{align*}
This verifies our claim.

\bibliographystyle{siam}
\bibliography{references}

\end{document}